\def\C{\mathcal C}
\def\constk{C}
\def\constl{c}
\def\gammaresc{\widetilde \gamma}
\def\gammaresctipoII{\widehat \gamma}
\def\gammaspec{\gamma^{\rm sp}}
\def\kappag{\kappa_\gamma}
\def\inidat{\overline\gamma}
\def\nuovoparametrotemporale{{\mathfrak t}}
\def\npt{\nuovoparametrotemporale}
\def\paraII{\mu}
\def\R{{\mathbb R}}
\def\regione{E}
\def\tangvers{\tau}
\numberwithin{equation}{section} 
\newtheorem{thm}{Theorem}[section]
\newtheorem{dfnz}[thm]{Definition}
\newtheorem{prop}[thm]{Proposition}
\newtheorem{rmk}[thm]{Remark}
\date{}
\begin{document}
\title[Curvature evolution of nonconvex lens-shaped domains]{Curvature evolution of nonconvex lens-shaped domains}

\author[Giovanni Bellettini]{Giovanni Bellettini}
\address[Giovanni Bellettini]{Dipartimento di Matematica, Univ. Roma 
Tor Vergata, via della Ricerca Scientifica,
00133 Roma, Italy, and INFN Laboratori Nazionali di Frascati,
via E. Fermi 40, Frascati (Roma),  
 Italy}
\email[G.~Bellettini]{Giovanni.Bellettini@lnf.infn.it}

\author[Matteo Novaga]{Matteo Novaga}
\address[Matteo Novaga]{Dipartimento di Matematica, Univ. Padova,  
via Trieste 63, 35121 Padova, Italy}
\email[M.~Novaga]{novaga@math.unipd.it}

\keywords{Curvature flow, blow-up singularities,
triple junctions}
\subjclass{
Primary 53C44; Secondary 35B40, 53A04.}

\begin{abstract}
We study the curvature flow  of planar 
nonconvex lens-shaped domains, 
considered as special symmetric networks with two triple junctions.
We show that 
the evolving domain becomes convex in finite time; then it shrinks 
homothetically to a point, as proved in \cite{SS:08}.
Our theorem is the 
analog of the result of Grayson \cite{Gr:87} for 
curvature flow of closed planar embedded curves.
\end{abstract}

\maketitle

\section{Introduction}\label{sec:intro}
Mean curvature flow of partitions, in particular of planar networks,
has been considered by various authors, see for instance 
\cite{Mu:56}, \cite{Br:78},
 \cite{BR:93}, \cite{De:96}, \cite{MNT:04}, 
\cite{Fr:08}, \cite{Sa:09}. 
Such a geometric flow
is a generalization of mean curvature flow, when more than two
phases are present. The main difficulties are
due to the presence of multiple junctions, typically
triple points in the planar case. 

In this paper we consider the curvature flow of a lens-shaped 
network, that is, of a
particular planar network symmetric with respect to the first
coordinate axis, 
and having there
two triple junctions.
If the bounded region enclosed by the network is convex, it is proved in 
\cite{SS:08} that the evolution remains convex and shrinks to a point in 
finite time, 
while its shape approaches
a unique profile $\gamma^{{\rm h}}$,
 corresponding to a homothetically shrinking
solution (see \cite[Fig. 1]{SS:08}).
This is the precise analog of the well-known
result of Gage and Hamilton 
\cite{GH:86},
which shows that a closed convex planar curve evolving by curvature shrinks 
to a point in finite time, approaching a circle. This result has been generalized
by Grayson \cite{Gr:87} 
who showed  that a 
closed nonconvex initial embedded curve 
has no singularities before the extinction, it 
becomes convex and eventually shrinks to a point.
A different proof of Grayson's theorem was given by Huisken
in \cite{Hu:98}.

Our aim is to study the long time 
curvature evolution of a general (not
necessarily convex) lens-shaped network. 
We will show that 
such a network 
becomes convex in finite time and eventually shrinks 
homothetically to a point,
as described in \cite{SS:08}.
Our result is, therefore,
the analog of the result of Grayson, but  
in the context of curvature flow of networks.
Our  proof is based on the classification of all possible singularities, 
in analogy to the proof given in \cite{Hu:98} for curvature flow of curves.
We point out that
in the evolution considered here we are able
to overcome the technical difficulties 
which prevented in \cite{MNT:04}
the complete analysis of type II singularities.

The  main result of the present paper, which 
is a consequence of Theorems \ref{th:exist}, 
\ref{th:singuno} and \ref{teosingII}, reads as follows.
\begin{thm}\label{thth}
Assume that the initial curve
 $\inidat: [0,1]\to \R^2$ satisfies the 
regularity and compatibility conditions listed in 
assumption (A) 
(Section \ref{sub:defgammaetc}) and 
is embedded (hypothesis
\eqref{asspos}). Then there exist
$T \in (0,+\infty)$ and a solution $\gamma\in \C^{2,1}([0,1]\times [0,T))$ 
of the evolution problem \eqref{eqevol} expressing the curvature
flow of a symmetryc network with two triple junctions, 
such that 
\begin{eqnarray*}
L(\gamma(t)) &\le& \constk \sqrt{2(T-t)}, \qquad t \in [0,T),
\\
\|\kappa_{\gamma(t)}\|_{L^\infty([0,1])}
 &\le& \frac{\constk}{\sqrt{2(T-t)}},
\qquad t \in [0,T),
\end{eqnarray*}
where $L(\gamma(t))$ and 
$\kappa_{\gamma(t)}$ 
denote the length and the curvature of $\gamma(t)$ respectively, 
and $C$ 
is an absolute positive constant.
Moreover, there exists $\overline t\in [0,T)$ such that 
the region $\regione(\gamma(t))$ enclosed by the 
corresponding network is
uniformly convex  for all $t\in [\overline t, T)$, and 
$T$ is the extinction time of the evolution, i.e. 
$$
\lim_{t \to T^-} L(\gamma(t)) = \lim_{t \to T^-}
\vert \regione(\gamma(t))\vert =0.
$$
Finally,   a suitable
rescaled and translated version of 
$\gamma(t)$  
converges in $\C^2([0,1];\R^2)$ to $\gamma^{{\rm h}}$  as $t\to T^-$.
\end{thm}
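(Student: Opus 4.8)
The plan is to obtain Theorem~\ref{thth} by combining Theorems~\ref{th:exist}, \ref{th:singuno} and \ref{teosingII}, in the spirit of Huisken's proof \cite{Hu:98} of Grayson's theorem.

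First I would invoke Theorem~\ref{th:exist} to produce the maximal existence time $T$ and the solution $\gamma\in\C^{2,1}([0,1]\times[0,T))$ of \eqref{eqevol}, together with the a priori estimates $L(\gamma(t))\le\constk\sqrt{2(T-t)}$ and $\|\kappa_{\gamma(t)}\|_{L^\infty([0,1])}\le\constk/\sqrt{2(T-t)}$. Finiteness of $T$ is part of that statement, and since $L(\gamma(t))\to 0$ as $t\to T^-$, the enclosed area $|\regione(\gamma(t))|$, which is controlled by $L(\gamma(t))^2$ via the isoperimetric inequality, also vanishes in the limit; hence $T$ is the extinction time and the two displayed limits hold.

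The core of the argument is the analysis of the singularity forming at $t=T$. I would run the standard parabolic blow-up of $\gamma$ at the extinction point, distinguishing the singularity according to whether $\sup_{t\in[0,T)}\sqrt{2(T-t)}\,\|\kappa_{\gamma(t)}\|_{L^\infty([0,1])}$ is finite (type~I) or infinite (type~II). Theorem~\ref{teosingII} excludes type~II singularities, so only type~I occurs, in agreement with the curvature bound above. Theorem~\ref{th:singuno} then classifies the type~I blow-up limits of a lens-shaped network: each of them is, up to rigid motions, the homothetically shrinking profile $\gamma^{\rm h}$, which is \emph{uniformly convex}. Since every type~I blow-up limit is this fixed convex curve, a compactness argument shows that the parabolically rescaled curves converge to $\gamma^{\rm h}$ as $t\to T^-$; in particular their curvature is eventually strictly positive, so there is $\overline t\in[0,T)$ such that $\regione(\gamma(t))$ is uniformly convex for every $t\in[\overline t,T)$.

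Once the network is uniformly convex, the evolution on $[\overline t,T)$ is precisely the one treated in \cite{SS:08}: it stays convex, shrinks homothetically to a point, and a suitable rescaled and translated version of $\gamma(t)$ converges in $\C^2([0,1];\R^2)$ to the unique profile $\gamma^{\rm h}$. This yields the last assertion and completes the proof. The genuinely difficult steps are the exclusion of type~II singularities and the identification of the type~I blow-up limits for networks with two triple junctions --- which is exactly the content of Theorems~\ref{teosingII} and \ref{th:singuno} --- so the remaining work here is organizational: transferring the uniform convexity of the blow-up limit back to the unrescaled flow near $T$, and matching the network description with the single-curve parametrization used in \cite{SS:08}.
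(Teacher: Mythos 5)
Your overall plan — chaining Theorems~\ref{th:exist}, \ref{teosingII} and \ref{th:singuno} — is the right one, and it matches the paper's intent (the paper declares Theorem~\ref{thth} a ``consequence'' of those three without spelling out the assembly). But there is a genuine logical error in how you launch the argument: you attribute the two quantitative estimates
\[
L(\gamma(t))\le \constk\sqrt{2(T-t)},\qquad \|\kappa_{\gamma(t)}\|_{L^\infty([0,1])}\le \frac{\constk}{\sqrt{2(T-t)}}
\]
to Theorem~\ref{th:exist}. Theorem~\ref{th:exist} gives only short-time existence, uniqueness, $T<+\infty$, and the blow-up $\limsup_{t\to T^-}\|\kappa_{\gamma(t)}\|_{L^2}=+\infty$; it contains no rate. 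The curvature rate $\|\kappa_{\gamma(t)}\|_{L^\infty}\le \constk/\sqrt{2(T-t)}$ is precisely the \emph{definition} of a type~I singularity (see~\eqref{eqsI}), and it is a \emph{conclusion} of the argument, obtained only after Theorem~\ref{teosingII} rules out type~II: since the curvature blows up (by~\eqref{linftyscoppia}) and type~II is excluded, the bound~\eqref{eqsI} must hold. Only at that point can Theorem~\ref{th:singuno} be applied. Similarly, the length rate $L(\gamma(t))\le \constk\sqrt{2(T-t)}$ does not come from Theorem~\ref{th:exist} either; it follows from the $\C^2$-convergence~\eqref{convgammah} of the rescaled curves to $\gamma^{\rm h}$ proved in Theorem~\ref{th:singuno}, since $L(\gammaresc^p(\npt))=L(\gamma(t(\npt)))/\sqrt{2(T-t(\npt))}\to L(\gamma^{\rm h})$, together with Proposition~\ref{rem:lunghezza} for $t$ bounded away from $T$. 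As written, you use the length bound to deduce extinction \emph{before} invoking the two singularity theorems, so the chain of implications is not available when you need it; reorder to: existence and $T<+\infty$ (Theorem~\ref{th:exist}), exclusion of type~II (Theorem~\ref{teosingII}), hence type~I curvature bound, then Theorem~\ref{th:singuno} for extinction, eventual uniform convexity and convergence, and finally the length bound as a by-product of the rescaled convergence.

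A secondary, smaller point: your last paragraph hands the flow off to \cite{SS:08} on $[\overline t,T)$ to obtain the $\C^2$-convergence to $\gamma^{\rm h}$. This is unnecessary and runs counter to the paper's explicit remark that the only input from \cite{SS:08} is the \emph{uniqueness} of $\gamma^{\rm h}$, which is consumed inside Theorem~\ref{th:singuno} when identifying the blow-up limit. Theorem~\ref{th:singuno} already delivers the $\C^2$-convergence~\eqref{convgammah} directly, so no further appeal to \cite{SS:08} is needed.
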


We note that to prove Theorem \ref{thth}
the only result needed from
\cite{SS:08} is 
 the uniqueness of $\gamma^{{\rm h}}$. 

In the last section  of the paper we exhibit
two examples of singularities appearing
 {\it before} the extinction time.
In Example 1 we show the formation of a singularity, 
starting from a suitable immersed initial datum $\inidat$
(see Fig. \ref{exa_typeII}); in this case
the $L^\infty$-norm of the curvature of $\gamma(t)$ blows up at $t=T$, and 
$T$ is smaller than the extinction time. 
In Example 2,
starting from an embedded double-bubble shaped $\inidat$
as in  Fig. \ref{centoventi}
(hence with different Neumann boundary conditions
with respect to the ones in Theorem \ref{thth})
 we show that the singularity appears at $t=T$ before the extinction time,
due to the collision of the two triple junctions.

We conclude this introduction by mentioning that a general
analysis of curvature flow of planar networks has been recently
announced to the second author by Tom Ilmanen \cite{IS}. 

\section{Notation}\label{sec:not}
Given 
$T>0$ and a map
$\gamma = (\gamma_1,\gamma_2) : [0,1] \times [0,T) \to \R^2$, 
 for
$t \in [0,T)$ 
we set
$\gamma(t) : [0,1] \to \R^2$, $\gamma(t)(x) := \gamma(x,t)$. If
$\gamma \in \C^{2,1}([0,1] \times [0,T);\R^2)$, we introduce
the following notation:
\begin{itemize}
\item[-] 
$L(\gamma(t)) := \int_0^1 |\gamma_x(x,t)|\, dx$ is 
the length of $\gamma(t)$, where $\gamma_x$ denotes the derivative 
with respect to $x$;
\item[-]  $s \in I(t):= [0, L(\gamma(t))]$ is the 
(time dependent) arclength parameter of $\gamma(t)$, and 
 $\partial_s := \dfrac{\partial_x}{|\gamma_x|}$ 
denotes the derivative with respect to $s$;
\item[-] 
$\tangvers_{\gamma(t)} = \tangvers(t)  = (\tangvers_1(t), \tangvers_2(t)) 
:= \gamma_s(t)$ is the unit tangent vector to $\gamma(t)$,
and  $\tangvers(t)(x) := 
\tangvers(x,t)$;
\item[-] $\nu_{\gamma(t)}= \nu(t) := \left(-\tangvers_2(t),\tangvers_1(t)
\right)$ is the normal vector to $\gamma(t)$
obtained by rotating $\tangvers(t)$ 
counterclockwise 
of $\pi/2$, and $\nu(t)(x) := \nu(x,t)$;
\item[-]
$\kappa_{\gamma(t)} := 
\langle \tangvers_s(t) ,\nu(t)\rangle = 
\langle \frac{\gamma_{xx}(t)}{\vert \gamma_x(t)\vert^2}, \nu(t)\rangle$ 
is the curvature of $\gamma(t)$, and
$\kappag(x,t):= \kappa_{\gamma(t)}(x)$;
\item[-] $\gamma_t :=\partial_t \gamma$ denotes the derivative
of $\gamma$ with
respect to $t$.
\end{itemize}
We denote by $\vert E\vert$ the Lebesgue measure of a measurable set
$E \subseteq \R^2$.
\subsection{The geometric evolution equation}\label{sub:geom}
We are concerned with the following geometric evolution problem:
\begin{equation}\label{eqevol}\left\{
\begin{array}{ll}
\displaystyle 
\gamma_t &=\, \dfrac{\gamma_{xx}}{|\gamma_x|^2} \qquad \qquad 
\quad {\rm in}~ (0,1) \times 
(0,T),
\\
\gamma_2(0,t)&=\,\gamma_2(1,t)\,=\,0 \qquad t \in (0,T),
\\
\tangvers(0,t) &= \,\Big( \dfrac 1 2,\dfrac{\sqrt 3}{2}\Big)
\quad \  \qquad t \in (0,T),
\\
\tangvers(1,t) &= \,\Big(\dfrac 1 2,-\dfrac{\sqrt 3}{2}\Big)
\qquad \ \ t \in (0,T),
\\
\gamma(0) &= \,\inidat 
\qquad
\qquad
\ \ 
\qquad {\rm in}~ (0,1)
\end{array}\right.
\end{equation}
where the initial curve $\inidat = (\inidat_1, \inidat_2)
\in \C^2([0,1]; \R^2)$  satisfies 
\begin{equation}\label{doremi}
\vert \inidat_x(x)\vert 
\neq 0, \qquad x \in [0,1],
\end{equation}
 and the compatibility conditions
\begin{equation}\label{ipo:datum}
\inidat_2(0) = \inidat_2(1)=0, \qquad
\frac{\inidat_x(0)}{\vert \inidat_x(0)\vert} = 
\Big(
 \dfrac 1 2,\dfrac{\sqrt 3}{2}\Big), \qquad 
\frac{\inidat_x(1)}{\vert \inidat_x(1)\vert} = \Big(
\dfrac 1 2,- \dfrac{\sqrt 3}{2}\Big).
\end{equation}

System \eqref{eqevol} corresponds to motion by curvature 
(first equation) of a planar curve 
with the extremal
points $\gamma(0,t)$, $\gamma(1,t)$ sliding on the first coordinate axis
(second equation), and satisfying
the following Neumann boundary conditions (third and fourth 
equation):
\begin{equation}\label{neumangle}
{\rm angle~ between~} 
e_1 {\rm ~and~} 
\tangvers(t) =
\begin{cases}
& \pi/3 {\rm ~at}~ \gamma(0,t) = (\gamma_1(0,t),0),
\\
& - 
\pi/3 {\rm~ at~} \gamma(1,t) = (\gamma_1(1,t),0),
\end{cases}
\end{equation}
where $e_1 := (1,0)$.

\subsection{Definitions of  
$\gammaspec$
and $\lambda$}\label{sub:defgammaetc}
For $t \in [0,T)$ 
we define the ``specular'' curve
$\gammaspec := (\gamma_1,-\gamma_2)$.
The corresponding network mentioned in the Introduction  is the one 
formed by $\gamma([0,1],t) \cup \gammaspec([0,1],t)$ 
and by the two horizontal half lines $(-\infty, \gamma_1(0,t))$
and $(\gamma_1(1,t), +\infty)$ lying
on the first coordinate axis. 

In the following, we let the function $\lambda_\gamma = 
\lambda: [0,1]\times [0,T)\to \R$ be such that 
\begin{equation}\label{eq:lambdakappa}
\gamma_t = \lambda\,\tangvers + \kappa\,\nu.
\end{equation}
Note that
\begin{equation}\label{mifasol}
\lambda = \langle \gamma_t, \tangvers\rangle
= \langle \frac{\gamma_{xx}}{\vert\gamma_x\vert^2}, \tangvers\rangle.
\end{equation}
Formally
differentiating in time the boundary conditions in \eqref{eqevol} 
(second equation)
and using
\eqref{eq:lambdakappa}
we have at $(0,t)$ and $(1,t)$ the relation
$0=\partial_t \gamma_{2} = \lambda \tangvers_2 + \kappag \nu_2$, which gives
\begin{equation}\label{eqkappalambdadiffuno}
\kappag (0,t) = -\sqrt 3\lambda(0,t), \qquad
\kappag (1,t) = \sqrt 3\lambda(1,t),
\end{equation}
where we make use of the third and fourth equations in \eqref{eqevol}.
Moreover, recalling from \cite[formula (2.4)]{MNT:04} that
$\tangvers_t = (\partial_s \kappag + \lambda\kappag)\nu$, 
we find 
\begin{equation}\label{eqkappasdiffdue}
\partial_s \kappag(0,t) +\lambda(0,t) \kappag(0,t)  = 
\partial_s \kappag(1,t) +\lambda(1,t) \kappag(1,t)  = 0.
\end{equation}
Notice that 
\eqref{eqkappasdiffdue} and
\eqref{eqkappalambdadiffuno} imply
\begin{equation}\label{behh}
\begin{aligned}
\partial_s \kappag(0,t) &= - \lambda(0,t) \kappag(0,t) = \frac{\kappag(0,t)^2}{\sqrt{3}}
\geq 0,
\\
\partial_s \kappag(1,t) &= - \lambda(1,t) \kappag(1,t) = -\frac{\kappag(1,t)^2}{\sqrt{3}}
\leq 0
\end{aligned}
\end{equation}
for all 
$t\in (0,T)$.
In particular, 
the function 
$\kappa_{\gamma(t)}$ can never attain its maximum at $x=0$
unless 
$\kappag(0,t) = \partial_s \kappag(0,t)=0$;  similarly
$\kappa_{\gamma(t)}$ can never attain its maximum at $x=1$
unless 
$\kappag(1,t) = \partial_s\kappag(1,t)=0$. 

\smallskip

From now on we will always make the following 
assumption (A) on $\inidat$:
\begin{itemize}
\item[(A)]
$\inidat \in \C^2([0,1];\R^2)$  
 satisfies \eqref{doremi}, \eqref{ipo:datum} 
and the second
order compatibility conditions 
\begin{equation}
\langle
\inidat_{xx}(0), \overline \nu(0)\rangle = -\sqrt{3}\langle
\inidat_{xx}(0), \overline \tangvers(0)\rangle, \qquad 
\langle \inidat_{xx}(1), \overline \nu(1)\rangle = -\sqrt{3}\langle
\inidat_{xx}(1), \overline \tangvers(1)\rangle, \qquad 
\end{equation} 
where $\overline \tangvers= \frac{\inidat_x}{\vert \inidat_x\vert}=
(\tangvers_1, \tangvers_2)$ and
$\overline \nu := (-\overline \tangvers_2, \overline \tangvers_1)$.
\end{itemize}

\smallskip

Note that under the sole assumption (A) the set $\overline 
\gamma([0,1],t)$ may have
self-intersections, see Fig. \ref{fig:immersed}. 

\begin{figure}
\begin{center}
\includegraphics[height=3.0cm]{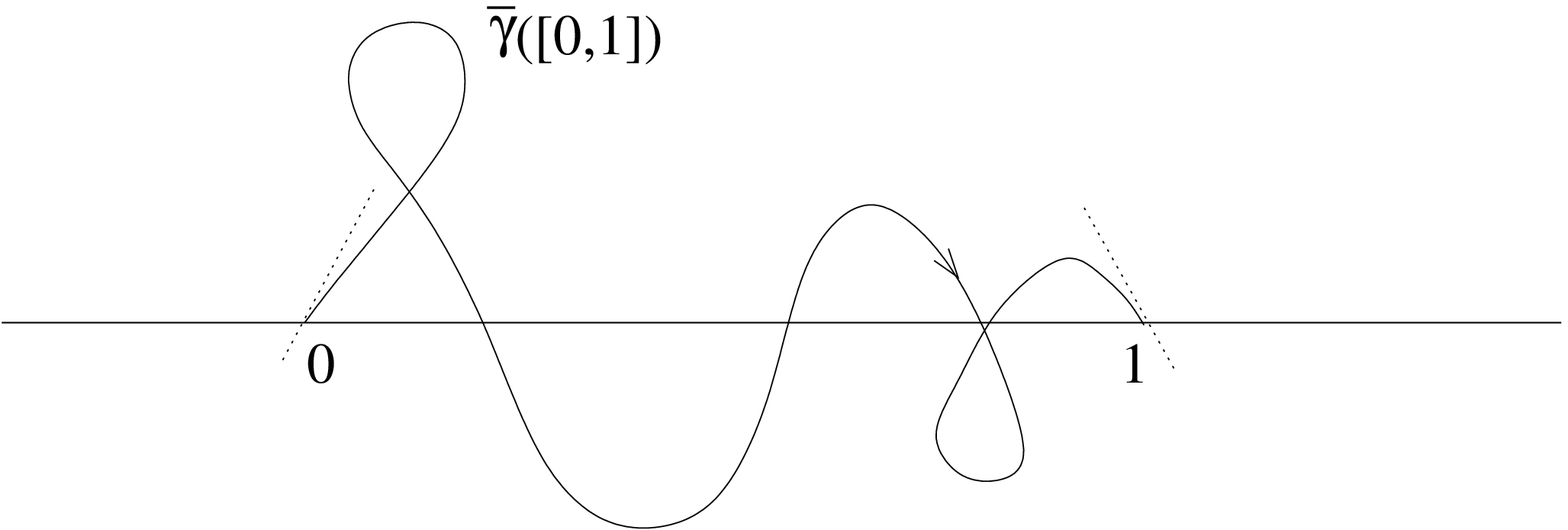}
\caption{{\small An immersed initial datum $\inidat$ satisfying 
assumption (A).}}
\label{fig:immersed}
\end{center}
\end{figure}

\begin{dfnz}
We will refer to the embedded case, provided 
\begin{equation}\label{asspos}
\inidat {\it ~is~ injective~
and~} \inidat_2(x)> 0 {\it ~for~ all~} x\in (0,1). 
\end{equation}
\end{dfnz}
In the embedded case $\inidat([0,1])$ is not necessarily
a graph with respect to the first coordinate axis. However, we can speak
of the connected bounded plane region $\regione(\inidat)$ in between 
$\inidat([0,1])$ and 
$\inidat^{{\rm sp}}([0,1])$,  see Fig. \ref{fig:embedded}. 

\begin{figure}
\begin{center}
\includegraphics[height=4.0cm]{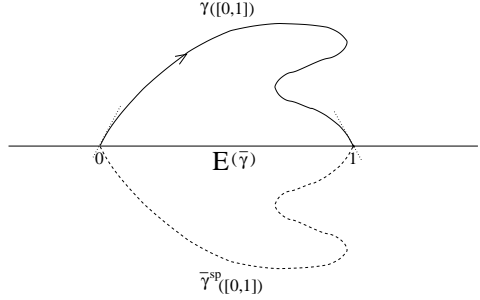}
\caption{{\small An embedded initial datum $\inidat$, with its specular one
(dotted curve)
and the region $\regione(\inidat)$ enclosed between the two curves. The points
$(0,0)$ and $(1,0)$ are the two triple junctions, if
one imagines to add to the curves the horizontal half lines on the 
left of $(0,0)$ and on the right of $(1,0)$.}}
\label{fig:embedded}
\end{center}
\end{figure}

We will refer to the {\it convex} case, 
provided 
$$
\inidat((0,1)) {\rm ~is~ the~ graph~ of~ a~ positive~ concave~ function}.
$$
The convex case is in particular embedded,
and has been studied in \cite{SS:08},
where it is proven that 
$\gamma(t)$ remains concave.
Therefore, the plane region $\regione(\gamma(t))$ between $\gamma([0,1],t)$ and 
$\gammaspec([0,1],t)$ is still well defined, it 
is 
 a convex lens-shaped domain 
evolving by curvature, 
and having the
two singular points $\gamma(0,t)$, $\gamma(1,t)$ in its
boundary. 
 
\begin{rmk}\label{rem:conve}
With our convention, 
 in the convex case
$\kappa_{\gamma(t)}$ is negative, since $\gamma(t)$ 
is parametrized in such a way that $\regione(\gamma(t))$
lies locally on the right of $\gamma(t)$.
\end{rmk}

\subsection{The homotetically shrinking solution
$\gamma^{{\rm h}}$.}\label{rem:homotetico}
In  \cite{CG:07},
\cite{SS:08}
it is proven that 
there exists a unique embedding 
 $\gamma^{{\rm h}}
\in \C^\infty([0,1]; \R^2)$ which satisfies 
$\gamma^{{\rm h}}_2(0) = \gamma^{{\rm h}}_2(1) = 0$,
$
\frac{
\gamma^{{\rm h}}_x(0)}{\vert \gamma^{{\rm h}}_x(0)
\vert}
 = (\frac{1}{2}, \frac{\sqrt{3}}{2})$,
$
\frac{\gamma^{{\rm h}}_x(1)}{\vert \gamma^{{\rm h}}_x(1)\vert}
 = (\frac{1}{2}, -\frac{\sqrt{3}}{2})$, 
which gives raise to a homothetically shrinking
curvature evolution, namely 
\begin{equation}\label{eq:homote}
\kappa_{\gamma^{{\rm h}}} + \langle \gamma^{{\rm h}}, \nu_{\gamma^{{\rm h}}} \rangle =0
\qquad {\rm in}~ (0,1).
\end{equation}
Moreover
$$
\inf_{x \in (0,1)}\kappa_{\gamma^{{\rm h}}}(x) >0.
$$
%
\section{Immersed initial data}\label{sec:imm}
In the next theorem $\gamma([0,1],t)$ is allowed to have self-intersections.

\begin{thm}\label{th:exist}
Assume that $\inidat$ satisfies (A).
Then problem
\eqref{eqevol} has a unique solution
$$
\gamma \in \C^\infty([0,1] \times (0,T);\R^2) 
\cap \C^{2,1}([0,1]\times [0,T);\R^2),
$$
 defined on a maximal time interval $[0,T)$, and $T<+\infty$. 
Moreover
\begin{equation}\label{ksing}
\limsup_{t\to T^-} \|\kappa_{\gamma(t)}\|_{L^2([0,1])} = 
+\infty.
\end{equation}
\end{thm}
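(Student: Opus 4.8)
The plan is to prove Theorem~\ref{th:exist} in three stages: short-time existence and uniqueness, finiteness of the maximal time $T$, and the blow-up of the $L^2$-norm of the curvature at $T$. For short-time existence, I would linearize the quasilinear parabolic system \eqref{eqevol} and appeal to standard parabolic theory for scalar quasilinear equations with nonlinear (Neumann-type) boundary conditions; the compatibility conditions of order zero and one built into assumption (A) — that is, \eqref{ipo:datum} together with the second-order compatibility relations — are exactly what is needed to obtain a solution in $\C^{2,1}([0,1]\times[0,\delta);\R^2)$ for some $\delta>0$, which is then smooth for positive times by parabolic bootstrap. Uniqueness follows from the parabolic maximum principle applied to the difference of two solutions (after choosing a common parametrization, e.g. by normalizing the tangential velocity). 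Defining $T$ as the supremum of all such $\delta$ gives the maximal interval $[0,T)$.

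For $T<+\infty$, the natural approach is to track a geometric quantity that must degenerate in finite time. Since the two endpoints slide on the $x$-axis and the Neumann angles are fixed at $\pm\pi/3$, the region between $\gamma(t)$ and its specular curve $\gammaspec(t)$ (or, in the immersed case, the algebraic area bounded by $\gamma(t)\cup\gammaspec(t)$ and the axis) satisfies an area-decay identity of the form $\frac{d}{dt}|\regione(\gamma(t))| = -\int \kappag\,ds + (\text{boundary terms from the moving endpoints})$. Using \eqref{neumangle} and the first variation of length one computes that this derivative equals a fixed negative constant (the standard computation for networks: each fixed $2\pi/3$-type angle contributes a constant), so the enclosed area decreases at a uniform rate and hence must vanish at some finite time; in particular the solution cannot exist smoothly past that time, giving $T<+\infty$.

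For \eqref{ksing}, I argue by contradiction: suppose $\limsup_{t\to T^-}\|\kappag(t)\|_{L^2([0,1])}<+\infty$. The strategy is to upgrade this $L^2$-curvature bound to uniform control on all higher derivatives of $\gamma$ up to time $T$, thereby extending the solution past $T$ and contradicting maximality. One derives the evolution equations for $\int \kappag^2\,ds$ and, more importantly, for $\int (\partial_s^m \kappag)^2\,ds$; these have the schematic form $\frac{d}{dt}\int(\partial_s^m\kappag)^2\,ds = -2\int(\partial_s^{m+1}\kappag)^2\,ds + (\text{polynomial in }\kappag\text{ and its }s\text{-derivatives}) + (\text{boundary terms})$. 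The boundary terms are controlled using the relations \eqref{eqkappalambdadiffuno}, \eqref{eqkappasdiffdue} and \eqref{behh}, which express $\kappag$ and $\partial_s\kappag$ at the endpoints in terms of $\lambda$ and lower-order data. Gagliardo--Nirenberg interpolation on the interval, together with the assumed $L^2$-bound on $\kappag$ and a lower bound on the length $L(\gamma(t))$ (itself a consequence of the linear-in-time area decay, since area controls length from below via the isoperimetric-type inequality adapted to this lens geometry), then closes the estimates: one obtains $\sup_{[0,T)}\int(\partial_s^m\kappag)^2\,ds<\infty$ for every $m$, hence $\C^\infty$ bounds up to $t=T$, hence a smooth limit $\gamma(T)$ still satisfying the compatibility conditions, which can be restarted — the contradiction.

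The main obstacle is the third step, and within it the treatment of the boundary terms in the higher-order energy estimates: unlike the closed-curve case of \cite{Gr:87,Hu:98}, here integration by parts in $s$ produces genuine contributions at $x=0$ and $x=1$, and these must be absorbed using precisely the cascade of boundary identities \eqref{eqkappalambdadiffuno}--\eqref{behh} (and their higher-order analogues, obtained by further differentiating the boundary conditions in time). Handling these terms without losing derivatives — i.e. showing each boundary contribution is dominated by the good negative term $-\int(\partial_s^{m+1}\kappag)^2\,ds$ plus controlled lower-order quantities — is the delicate point; everything else is the by-now-standard interpolation machinery for curvature flow.
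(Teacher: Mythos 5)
Your first and third stages are essentially what the paper delegates to \cite{MNT:04} — the paper's own proof for short-time existence, uniqueness, regularity, and the blow-up criterion \eqref{ksing} is a single citation to \cite[Theorems 3.1, 3.18, Remark 3.24]{MNT:04}, and the machinery you sketch (energy estimates for $\int(\partial_s^m\kappa)^2\,ds$, the boundary-term cascade from \eqref{eqkappalambdadiffuno}--\eqref{behh}, Gagliardo--Nirenberg interpolation, and a length lower bound) is in fact the content of that reference, so no objection there.

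The genuine gap is in your second stage, the proof that $T<+\infty$, which is the only part the paper actually argues. Theorem~\ref{th:exist} is stated under assumption (A) alone, so $\inidat$ may be immersed with self-intersections, and $\inidat_2$ may change sign (Fig.~\ref{fig:immersed}). Your area-decay argument does not survive this generality. First, the ``algebraic area'' need not start positive, so ``decreases at a uniform rate, hence must vanish at some finite time'' establishes nothing. Second, and more fundamentally, the constant rate of change $\tfrac{d}{dt}|\regione|=2\int\kappa\,ds$ is fixed only modulo $2\pi$ by the Neumann conditions: for an immersed curve with extra winding the total turning $\int\kappa\,ds$ equals $-\tfrac{2\pi}{3}+2k\pi$ for some $k\in\mathbb Z$ determined by the homotopy class of $\inidat$, and for $k\geq 1$ the algebraic area \emph{increases}, so no finite-time degeneracy is forced. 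Third, even when the algebraic area does vanish, for an immersed curve this imposes no constraint on smoothness, so one cannot conclude a singularity. The paper instead proves $T<+\infty$ by a comparison (avoidance) argument: it chooses a convex homothetically shrinking lens $\regione(\overline\eta)$ containing the image of $\inidat$, with extinction time $t^*<+\infty$, and shows that $\gamma([0,1],t)\subset\regione(\eta(t))$ persists as long as both solutions exist; since $\regione(\eta(t))$ shrinks to a point at $t^*$, necessarily $T\le t^*$. The delicate part of that argument is verifying the avoidance principle at the triple junctions (the cases $p^t$ or $q^t$ an endpoint), which is where the Neumann conditions \eqref{eqevol}, \eqref{bdryeta} and the convexity of the barrier $\eta(t)$ are used. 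Your approach would only work (after adding the missing topological step that a smooth \emph{embedded} lens cannot bound zero area) under the extra hypothesis \eqref{asspos}, which this theorem deliberately does not assume.
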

\begin{proof}
All assertions but $T < +\infty$ follow from \cite[Theorems 3.1, 3.18 and 
Remark 3.24]{MNT:04}. Let us show that $T < +\infty$.
Take an initial open convex bounded lens-shaped 
domain $\regione(\overline \eta)$ 
with 
$$
\regione(\overline \eta) \supset 
\inidat([0,1]),
$$
whose boundary is given by $\overline \eta([0,1]) \cup 
\overline \eta^{\rm sp}([0,1])$,
where $\overline \eta: [0,1]\to \R^2$ gives raise to a homothetically
shrinking curvature evolution $\eta: 
[0,1]\times [0,t^*)\to \R^2$,  
$t^*<+\infty$, 
with the same boundary conditions as  $\gamma$, i.e., 
\begin{equation}\label{bdryeta}
\eta_2(0,t) = \eta_2(1,t) = 0, \quad
\frac{\eta_x(0,t)}{\vert \eta_x(0,t)\vert} = 
\left(\frac{1}{2}, \frac{\sqrt{3}}{2}\right), \quad
\frac{\eta_x(1,t)}{\vert \eta_x(1,t)\vert} = \left(
\frac{1}{2}, -\frac{\sqrt{3}}{2}\right), 
\end{equation}
see Fig. \ref{fig:comparison} and Section \ref{rem:homotetico}. 

We claim that the following comparison principle
holds: 
\begin{equation}\label{eq:compa}
\regione(\eta(t))  \supset \gamma([0,1],t),
\end{equation}
for all times $t \in [0,t^\#)$, where $t^\#:= \min(t^*,T)$.

Since the proof of this comparison result differs slightly
from the standard comparison proof for curvature flow,
we indicate here the main steps. 
Define 
$$
\delta(t) := {\rm dist}\left(
\partial \regione(\eta(t)), \gamma([0,1],t)
\right), \qquad
t \in [0,t^\#).
$$
To prove \eqref{eq:compa}, it is enough
to show that 
\begin{equation}\label{eq:deltaprimepositive}
\lim_{h \to 0^+}
\frac{\delta(t+h)-\delta(t)}{h} \geq 0, \qquad t \in (0,t^\#).
\end{equation}
For any $(x,\xi,t) \in [0,1]^2\times [0,T)$ 
set
$$
u(x, \xi,t):= \vert \eta(x,t) - \gamma(\xi,t)\vert, 
\qquad 
v(x, \xi,t):= \vert \eta^{{\rm sp}}(x,t) - \gamma(\xi,t)\vert.
$$
It is well known (see for instance \cite{Ha:86}) that
$$
\lim_{h \to 0^+}
\frac{\delta(t+h)-\delta(t)}{h} =
\min 
\Big( U(t), V(t) \Big)
$$
where 
$$
\begin{aligned}
U(t) & := \min
\Big\{
\frac{\partial u}{\partial t}(x,\xi,t) : 
(x, \xi) \in [0,1] \times [0,1], 
\delta(t) = u(x,\xi,t) \Big\},
\\
V(t) & := 
\min 
\Big\{ \frac{\partial v}{\partial t}(y,\eta,t)
: 
(y, \eta) \in [0,1] \times [0,1], 
\delta(t) = v(y,\eta,t) \Big\}.
\end{aligned}
$$
Given $t \in (0,t^\#)$, 
we denote by $x^t,\xi^t \in [0,1]$
two parameters for which either 
$\delta(t) = u(x^t,\xi^t,t)$, or 
$\delta(t) = v(x^t,\xi^t,t)$. Without loss of generality, we assume
$\delta(t) = u(x^t,\xi^t,t)$, and we set
$$
q^t := \eta(x^t,t),
\qquad p^t := \gamma(\xi^t,t),
$$
see Fig. \ref{fig:comparison}.
Note that
\begin{equation}\label{qeta}
q^t \notin \{\eta(0,t), \eta(1,t)\}. 
\end{equation}
Indeed  if by contradiction we have for instance 
$q^t = \eta(0,t)$ then,
 in view of the Neumann boundary conditions in \eqref{eqevol} and \eqref{bdryeta}, 
the distance between $p^t$ and  a point
$q$ on $\partial \regione(\eta(t))$ would decrease when $q$ moves from 
$q^t$ sliding slightly either on $\eta([0,1],t)$ or on 
$\eta^{{\rm sp}}([0,1],t)$.

\begin{figure}
\begin{center}
\includegraphics[height=3.5cm]{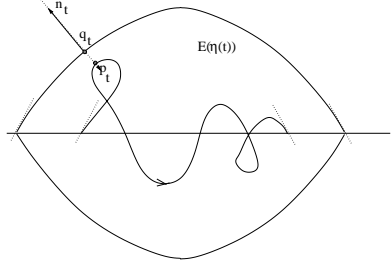}
\caption{{\small The inner curve is $\gamma(t)$, the outer
curve is $\eta(t) \cup \eta^{{\rm sp}}(t)$, bounding the 
self-similar shrinking convex set $\regione(\eta(t))$}.}
\label{fig:comparison}
\end{center}
\end{figure}

We now distinguish two cases.

{\it Case 1}. $p^t \notin \{\gamma(0,t), \gamma(1,t)\}$, see Fig. \ref{fig:comparison}.
In this case,  thanks to \eqref{qeta},
we are reduced
to the standard situation of curvature flow
(see for instance \cite{An:90}), and \eqref{eq:deltaprimepositive}
follows. 

{\it Case 2}. 
$p^t \in \{\gamma(0,t), \gamma(1,t)\}$. Without loss of generality,
we can assume that $p^t = \gamma(1,t)$, and that the second 
component of $q^t$ is positive. 
 Let $n^t:= \frac{q^t-p^t}{\vert
q^t-p^t\vert}$. Then it is not difficult to 
see that $n^t$ equals the  unit normal to $\partial \regione(\eta(t))$
at $q^t$ pointing out of $\regione(\eta(t))$. Let
$K := \{(\cos\theta, \sin\theta) : \theta \in [0,\pi/6]\}$.
If $n^t \in \partial K$ 
 then
again \eqref{eq:deltaprimepositive} follows in a standard way. 
On the other hand, we cannot
have   $n^t = (\cos \theta^t, \sin\theta^t) : \theta^t \in 
[0,\pi/6)\}$, since this contradicts  
the Neumann boundary conditions in \eqref{bdryeta} and the convexity 
of $\eta(t)$. 

The proof of \eqref{eq:compa} is concluded, and in particular
$T\le t^*$.
\end{proof}

Note that the smoothness of $\gamma$ implies that 
 $\Vert \kappa_{\gamma(t)}\Vert_{L^\infty([0,1])}$ is finite for all
$t \in [0,T)$. On the other hand, from \eqref{ksing} we deduce 
that 
\begin{equation}\label{linftyscoppia}
\limsup_{t\to T^-} \|\kappa_{\gamma(t)}\|_{L^\infty([0,1])} = 
+\infty.
\end{equation}

\begin{prop}\label{rem:lunghezza}\rm
There exists a constant $\constl 
>0$ independent of $\gamma$ such that 
\begin{equation}\label{LC}
L(\gamma(t)) \leq \constl 
L(\inidat), \qquad t \in [0,T).
\end{equation}
\end{prop}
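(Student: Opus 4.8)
The plan is to show that the length $L(\gamma(t))$ is nonincreasing in time, up to a controllable boundary contribution, and then to argue that the boundary terms actually have the right sign. First I would compute the time derivative of the length. Using the evolution equation $\gamma_t = \lambda\tangvers + \kappa\nu$ together with the standard formula $\partial_t|\gamma_x| = (\lambda_s - \kappa^2)|\gamma_x|$ (which follows from $\tangvers_t = (\partial_s\kappa + \lambda\kappa)\nu$ as recalled in the excerpt), we get
\begin{equation*}
\frac{d}{dt}L(\gamma(t)) = \int_0^1 \partial_t|\gamma_x|\,dx = \int_{I(t)} (\partial_s\lambda - \kappa^2)\,ds = \big[\lambda\big]_{s=0}^{s=L(\gamma(t))} - \int_{I(t)} \kappa^2\,ds.
\end{equation*}
So the whole question reduces to the sign of the boundary term $\lambda(1,t) - \lambda(0,t)$.

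The key observation is that the Neumann angle conditions pin down $\lambda$ at the endpoints in terms of $\kappa$ via \eqref{eqkappalambdadiffuno}: $\lambda(0,t) = -\kappag(0,t)/\sqrt3$ and $\lambda(1,t) = \kappag(1,t)/\sqrt3$. Hence
\begin{equation*}
\lambda(1,t) - \lambda(0,t) = \frac{\kappag(1,t) + \kappag(0,t)}{\sqrt3}.
\end{equation*}
This is not obviously of any particular sign, so a naive bound does not immediately close. The next step would be to exploit the geometry: the two endpoints $\gamma(0,t)$ and $\gamma(1,t)$ both lie on the first coordinate axis, so $\gamma_2(0,t) = \gamma_2(1,t) = 0$. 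Differentiating the position along the curve, $\gamma_2(1,t) - \gamma_2(0,t) = \int_{I(t)} \tangvers_2\,ds = 0$, i.e. $\int_{I(t)}\tangvers_2\,ds = 0$. More usefully, one can relate $\int_{I(t)}\kappa\,ds$ to the turning of the tangent, which by the fixed angle conditions at the endpoints is a prescribed constant; writing $\theta$ for the angle of $\tangvers$ with $e_1$, we have $\partial_s\theta = \kappa$ and $\theta(0) = \pi/3$, $\theta(L) = -\pi/3$, so $\int_{I(t)}\kappa\,ds = \theta(L)-\theta(0) = -2\pi/3$ is constant in time. That by itself controls the average of $\kappa$ but not its boundary values.

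The cleanest route, and the one I would actually pursue, is to bypass the sign issue by a Gronwall/maximum-principle argument rather than by monotonicity. I expect the main obstacle to be precisely that $\frac{d}{dt}L$ need not be $\le 0$ pointwise in time because of the boundary term; the fix is to control $\lambda(1,t)+\lambda(0,t)$, equivalently $\kappag(0,t)+\kappag(1,t)$, by the $L^\infty$-norm or by an integral norm of $\kappa$ that is itself dominated by $\int_{I(t)}\kappa^2\,ds$. Concretely, since $\partial_s\kappa(0,t) = \kappag(0,t)^2/\sqrt3 \ge 0$ and $\partial_s\kappa(1,t) = -\kappag(1,t)^2/\sqrt3 \le 0$ from \eqref{behh}, the curvature cannot have an interior-style extremum pushing its boundary values up; combined with $\int\kappa\,ds = -2\pi/3$ and an interpolation estimate $\|\kappa\|_{L^\infty}^2 \le c\,L\int\kappa_s^2 + \tfrac{c}{L}\int\kappa^2$ one bounds $|\kappag(0,t)| + |\kappag(1,t)|$ in terms of $\int_{I(t)}\kappa^2\,ds$ plus a term absorbable into it. Plugging this into the length identity yields $\frac{d}{dt}L(\gamma(t)) \le 0$ once $L$ is small, and in general $\frac{d}{dt}L(\gamma(t)) \le C$ for an absolute constant, whence $L(\gamma(t)) \le L(\inidat) + Ct \le L(\inidat) + CT$. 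Finally, invoking $T < +\infty$ and the comparison estimate $T \le t^*$ from Theorem \ref{th:exist} (with $t^*$ itself estimated by the size of a circumscribed self-similar lens, hence by $L(\inidat)$), one upgrades this to the clean bound $L(\gamma(t)) \le \constl\,L(\inidat)$ with $\constl$ absolute. The delicate point throughout is keeping every constant independent of $\gamma$, which forces the interpolation and the choice of comparison domain $\regione(\overline\eta)$ to be made scale-covariantly.
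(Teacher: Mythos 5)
Your opening computation is the same as the paper's: $\frac{d}{dt}L(\gamma(t)) = \lambda\big|_{x=0}^{x=1} - \int_{I(t)}\kappa^2\,ds$, and the whole question is how to handle the boundary term. From there, however, you take the wrong branch. You rewrite $\lambda$ at the endpoints via \eqref{eqkappalambdadiffuno} as $\pm\kappag/\sqrt{3}$ and then try to dominate $\kappag(0,t)+\kappag(1,t)$ by $\int\kappa^2\,ds$ using interpolation. That step does not close: the interpolation inequality you quote bounds $\|\kappa\|_{L^\infty}$ in terms of $\|\kappa_s\|_{L^2}$ as well as $\|\kappa\|_{L^2}$, and there is no a priori control on $\int\kappa_s^2\,ds$ along the flow (indeed it blows up near $T$), so there is no ``absorbable term.'' Moreover, the conclusion you want, $\frac{d}{dt}L \le C$ with an absolute constant $C$, is not scale-covariant: under $\gamma\mapsto\theta\gamma$, $t\mapsto\theta^2 t$, one has $\frac{d}{dt}L\mapsto\theta^{-1}\frac{d}{dt}L$, so no dimensionless constant can work, and the resulting bound $L(\gamma(t))\le L(\inidat)+CT$ (with $T\lesssim L(\inidat)^2$) does not produce the homogeneous estimate $L(\gamma(t))\le\constl L(\inidat)$.

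The paper avoids all of this with one observation you missed: since $\gamma_2(0,t)\equiv 0$, the velocity $\gamma_t(0,t)$ is horizontal, and then $\lambda(0,t) = \langle\gamma_t(0,t),\tangvers(0,t)\rangle = \tfrac12\,\partial_t\gamma_1(0,t)$ because $\tangvers(0,t)=(\tfrac12,\tfrac{\sqrt3}{2})$; similarly $\lambda(1,t)=\tfrac12\,\partial_t\gamma_1(1,t)$. Thus the boundary term $\lambda\big|_{x=0}^{x=1}$ is $\tfrac12\frac{d}{dt}\big(\gamma_1(1,t)-\gamma_1(0,t)\big)$, a \emph{total time derivative} of the horizontal width. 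Dropping the nonpositive $-\int\kappa^2\,ds$ and integrating exactly in time gives \eqref{stimaelle}, and the horizontal width $\gamma_1(1,t)-\gamma_1(0,t)$ is then bounded by $\constl L(\inidat)$ via the same circumscribed self-similar lens comparison already set up in the proof of Theorem~\ref{th:exist}. No curvature estimate at the boundary is ever needed, and every step is scale-covariant. You should replace the interpolation attempt by this horizontal-velocity identity; the comparison ingredient at the end of your sketch is the right one, but it must be applied to the width, not to $T$.
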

\begin{proof}
Since $\gamma_t(0,t)$ and $\gamma_t(1,t)$ are horizontal,
it follows from \eqref{mifasol} that $\lambda(0,t) = \partial_t
\gamma_1(0,t)/2$, 
and $\lambda(1,t) = \partial_t \gamma_1(1,t)/2$.
Observing (see \cite[Proposition 3.2]{MNT:04})
that the time-derivative of the measure $ds$ is given by
\begin{equation}\label{eq:changelungh}
(\lambda_s - \kappag^2)\,ds,
\end{equation}
we have
\begin{eqnarray}\label{jelka}
\frac{d}{dt} L(\gamma(t)) &=& 
 \lambda\big\vert^{x=1}_{x=0}
-\int_{I(t)} \kappa_{\gamma(t)}^2\,ds 
=\frac{1}{2} \left( \partial_t \gamma_1(1,t) - 
\partial_t \gamma_1(0,t)\right) 
-\int_{I(t)} \kappa_{\gamma(t)}^2\,ds 
\\
\nonumber 
&\leq& 
\frac{1}{2} \left( \partial_t \gamma_1(1,t) - 
\partial_t \gamma_1(0,t)\right).
\end{eqnarray}
Hence
\begin{equation}\label{stimaelle}
L(\gamma(t)) \le L(\inidat) - \frac{1}{2} \big( 
\gamma_1(1,0) - \gamma_1(0,0)\big)
+ 
\frac{1}{2} \left( 
\gamma_1(1,t) - \gamma_1(0,t)
\right).
\end{equation}
Therefore, to conclude the proof it is enough to show that 
$\gamma_1(1,t) - \gamma_1(0,t)$
 is bounded by 
$\constl L(\inidat)$, where $\constl>0$ is an absolute constant
independent of $\gamma$. This  assertion can be proved 
by a comparison argument as in the proof of Theorem \ref{th:exist}:
taking a lens-shaped convex domain as in Theorem \ref{th:exist}, 
it follows that the horizontal length 
$\gamma_1(1,t) - \gamma_1(0,t)$ cannot be larger 
than the corresponding horizontal length of $\regione(\eta(t))$, which 
can be bounded by an absolute constant times $L(\inidat)$.
\end{proof}

Following \cite{Hu:90} and recalling \eqref{linftyscoppia}, we say that: 
\begin{itemize}
\item[$\bullet$] $\gamma$ develops a {\it type I singularity} at $t=T$
if there exists $\constk > 0$ such that 
\begin{equation}\label{eqsI}
\|\kappa_{\gamma(t)}\|_{L^\infty([0,1])}\le \frac{\constk}{\sqrt{2(T-t)}},
\qquad t \in [0,T).
\end{equation}
\item[$\bullet$]
$\gamma$ develops a {\it type II singularity} at $t =
T$ if
$$
\limsup_{t \to T^-} \sqrt{2(T-t)} \,\Vert 
\kappa_{\gamma(t)}\Vert_{L^\infty([0,1])} = +\infty.
$$
\end{itemize}
Before passing to the next result, 
we recall from \cite[Eq. (2.6)]{MNT:04} that 
 the evolution equation for $\kappa$ reads as follows:
\begin{equation}\label{eqcurv}
\partial_t \kappag \,=\, \partial_{ss} \kappag + \lambda
\partial_s \kappag + \kappag^3.
\end{equation}
Note that this equation, being local, is valid
under the sole assumption (A).

The next observation 
is used to prove Proposition \ref{prop:tripodi}, which in turn will be used 
to prove Theorem \ref{teosingII}.

\begin{rmk}\label{rem:anal}
The solution $\gamma$ of \eqref{eqevol}
is analytic in $(0,1) \times
(0,T)$; in particular, for a given $t \in (0,T)$, the set 
$$
z(t) := \left\{x
\in [0,1]: \kappa_{\gamma(t)}(x)=0\right\}
$$
is finite. 
\end{rmk}

\begin{prop}\label{prop:tripodi} For any $t \in [0,T)$ we have 
\begin{equation}\label{eq:absvalkappa}
\frac{d}{dt} \int_{I(t)}|\kappa_{\gamma(t)}|\,ds = 
- 2\sum_{x \in z(t)} |\partial_s \kappag(x,t)|\ \le\ 0\,.
\end{equation}
\end{prop}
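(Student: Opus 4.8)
The plan is to differentiate $\int_{I(t)}|\kappa_{\gamma(t)}|\,ds$ directly, splitting the integral over the finitely many subintervals on which $\kappa_{\gamma(t)}$ has a constant sign. On each such subinterval we may write $|\kappa| = \varepsilon_j\kappa$ with $\varepsilon_j \in \{+1,-1\}$ fixed, so that $\frac{d}{dt}\int |\kappa|\,ds = \sum_j \varepsilon_j \frac{d}{dt}\int_{a_j(t)}^{a_{j+1}(t)} \kappa\,ds$ where $a_0(t)=0 < a_1(t) < \dots < a_m(t) < a_{m+1}(t) = L(\gamma(t))$ and the interior $a_j(t)$ are the (moving) zeros of $\kappa_{\gamma(t)}$, which by Remark \ref{rem:anal} are finite in number. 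First I would use \eqref{eqcurv} together with the formula \eqref{eq:changelungh} for $\partial_t(ds)$ to compute, on a fixed sign-interval,
\begin{equation*}
\frac{d}{dt}\int_{a_j(t)}^{a_{j+1}(t)}\kappa\,ds = \int \big(\partial_{ss}\kappa + \lambda\,\partial_s\kappa + \kappa^3\big)\,ds + \int \kappa\,(\lambda_s - \kappa^2)\,ds + \text{(boundary terms from moving endpoints)}.
\end{equation*}
The boundary terms from the moving endpoints involve $\kappa(a_j(t),t)$, which vanishes at every interior zero, so those terms drop out at all interior $a_j$; the only surviving endpoint contributions are at $x=0$ and $x=1$.

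Next I would simplify the integrand. The two volume integrals combine: $\int(\partial_{ss}\kappa + \lambda\partial_s\kappa + \kappa^3 + \kappa\lambda_s - \kappa^3)\,ds = \int(\partial_{ss}\kappa + \partial_s(\lambda\kappa))\,ds = [\partial_s\kappa + \lambda\kappa]_{a_j}^{a_{j+1}}$, a perfect derivative. Hence on the $j$-th sign-interval the contribution is $\varepsilon_j\big(\partial_s\kappa + \lambda\kappa\big)\big|_{a_j(t)}^{a_{j+1}(t)}$. At an interior zero $a_j$ we have $\kappa=0$ so this reduces to $\varepsilon_j\,\partial_s\kappa$ evaluated there; moreover the sign $\varepsilon_j$ flips across each interior zero (a zero is a genuine sign change, since an even-order zero can be excluded — or handled separately — and in any case $\partial_s\kappa$ at a zero where $\kappa$ does not change sign would be zero itself, contributing nothing). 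Summing $\varepsilon_j(\partial_s\kappa+\lambda\kappa)$ telescopically over $j$, each interior zero $x$ is counted twice — once as the right endpoint of interval $j-1$ with sign $\varepsilon_{j-1}$ and once as the left endpoint of interval $j$ with sign $\varepsilon_j = -\varepsilon_{j-1}$ — producing $-\varepsilon_{j-1}\partial_s\kappa(x,t) - \varepsilon_{j-1}\partial_s\kappa(x,t) = -2\varepsilon_{j-1}\partial_s\kappa(x,t)$. At a sign change from $-$ to $+$ (i.e. $\kappa$ increasing through zero) $\partial_s\kappa \ge 0$ and $\varepsilon_{j-1}=-1$, giving $-2|\partial_s\kappa|$; the other case is symmetric. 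So every interior zero contributes exactly $-2|\partial_s\kappa(x,t)|$, which is the claimed sum.

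It remains to check that the endpoint terms at $x=0$ and $x=1$ vanish, and this is precisely where the Neumann structure of the problem enters — I expect this to be the only subtle point. The endpoint contribution at $x=1$ is $\pm(\partial_s\kappa + \lambda\kappa)(1,t)$, which is zero by \eqref{eqkappasdiffdue}; similarly at $x=0$. One must also be slightly careful if $x=0$ or $x=1$ happens itself to be a zero of $\kappa_{\gamma(t)}$, but by \eqref{behh} if $\kappa_\gamma(0,t)=0$ then $\partial_s\kappa_\gamma(0,t)=0$ as well, so such a boundary zero contributes nothing and the formula is unaffected; the analogous remark holds at $x=1$. A final technical point is the differentiability in $t$ of the zeros $a_j(t)$: by analyticity (Remark \ref{rem:anal}) the zeros move smoothly in $t$ except at a discrete set of times where zeros are created or annihilated in pairs, and at those exceptional times a created/annihilated zero has $\partial_s\kappa = 0$ there, so $\frac{d}{dt}\int|\kappa|\,ds$ has no jump and the identity \eqref{eq:absvalkappa} persists, with the right-hand side understood via one-sided derivatives if needed. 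Assembling these pieces yields \eqref{eq:absvalkappa}, and the inequality $\le 0$ is immediate since every summand $|\partial_s\kappa(x,t)|$ is nonnegative.
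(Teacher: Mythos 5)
Your proof is correct and takes essentially the same route as the paper: after invoking analyticity (Remark \ref{rem:anal}) to get finitely many zeros, you combine \eqref{eqcurv} and \eqref{eq:changelungh}, recognize the integrand $\partial_{ss}\kappa_\gamma+\partial_s(\lambda\kappa_\gamma)$ as a perfect derivative, collect contributions at the zeros, and kill the endpoint terms via \eqref{eqkappasdiffdue}. The only difference is notational: the paper integrates by parts with the sign function $\kappa_\gamma/|\kappa_\gamma|$ and reads off the sum over $z(t)$ from its distributional derivative, whereas you split explicitly into sign-constant subintervals and telescope; your extra remarks on even-order zeros and on the temporal regularity of the zero set are correct and supply detail the paper leaves implicit.
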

\begin{proof}
Using  Remark \ref{rem:anal},  \eqref{eq:changelungh} and
\eqref{eqcurv}
we compute 
\begin{equation}\label{eqstoma}
\begin{aligned}
\frac{d}{dt} \int_{I(t)}|\kappa_{\gamma(t)}|\,ds =& 
\int_{I(t)} \left[
\frac{\kappag}{|\kappag|} \partial_t\kappag +
(\lambda_s- \kappag^2)\vert \kappag\vert\right]
\,ds
\\
=& 
\int_{I(t)}
\left[ \frac{\kappag}{|\kappag|} \partial_{ss} \kappag + 
(\lambda|\kappag|)_s\right]\,ds.
\end{aligned}
\end{equation}
Integrating by parts we have
\begin{eqnarray}\label{eqpart}
\int_{I(t)} 
\frac{\kappag}{|\kappag|} \partial_{ss}\kappag \,ds = 
\frac{\kappag}{|\kappag|}  \partial_s\kappag 
\big\vert^{x=1}_{x=0} - 
\int_{I(t)} 
\left( 
\frac{\kappag}{\vert \kappag\vert}
\right)_s \partial_s\kappag ~ds.
\end{eqnarray}
Moreover
\begin{equation}\label{tresedici}
\int_{I(t)} 
\left( 
\frac{\kappag}{\vert \kappag\vert}
\right)_s \partial_s\kappag ~ds = 2 
\sum_{x \in z(t)}|\partial_s\kappag(x,t)|.
\end{equation}
Hence from \eqref{eqstoma}, \eqref{eqpart} and \eqref{tresedici} we deduce
\begin{equation}\label{eqstomabuis}
\begin{aligned}
\frac{d}{dt} \int_{I(t)}|\kappa_{\gamma(t)}|\,ds
=& - 2\sum_{x \in z(t)}|\partial_s \kappag(x,t)| + 
\frac{\kappag}{|\kappag|}
\left( \partial_s\kappag 
+\lambda\kappag\right)
\big\vert^{x=1}_{x=0}
\\
=& 
- 2
\sum_{x \in z(t)}|\partial_s \kappag(x,t)|
\ \le\ 0\,.
\end{aligned}
\end{equation}
\end{proof}
\section{Embedded nonconvex initial data: type I singularities}\label{secpos}
In this section, as well
as in Section \ref{secpostwo}, we  consider the 
embedded case. We begin to show
that
embeddedness is a property which is preserved by the evolution.

\begin{prop}\label{prop:embedded}
Assume that $\inidat$ satisfies (A) and 
\eqref{asspos}. Then 
\begin{itemize}
\item[(i)] for any $t \in [0,T)$ 
\begin{equation}\label{asspost}
\gamma(t) {\it ~is~ injective~
and~} \gamma_2(x,t)> 0 {\it ~for~ all~} x\in (0,1);
\end{equation}
\item[(ii)] for any $t \in [0,T)$ 
\begin{equation}\label{eqvol}
\vert \regione(\gamma(t))\vert = - \frac{4 \pi}{3} t
+ \vert \regione(\inidat)\vert.
\end{equation}
\end{itemize}
\end{prop}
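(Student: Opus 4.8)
\textbf{Proof plan for Proposition \ref{prop:embedded}.}

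The plan is to prove the two assertions in turn, with (i) coming first since (ii) will rely on a clean enclosed-region being well defined throughout $[0,T)$. For part (i), I would argue that the two properties ``$\gamma(t)$ injective'' and ``$\gamma_2(x,t)>0$ for $x\in(0,1)$'' are both preserved, by a standard but boundary-adapted continuity/maximum-principle argument. First consider positivity of $\gamma_2$: at an interior minimum in $x$ of $\gamma_2(\cdot,t)$ located in $(0,1)$ one has $\gamma_2=0$, $\partial_x\gamma_2=0$, hence the curve is locally horizontal there and the evolution equation $\gamma_t=\gamma_{xx}/|\gamma_x|^2$ forces $\partial_t\gamma_2\ge 0$ (the vertical component of curvature pushes $\gamma_2$ back up), so $\gamma_2$ cannot become negative in the interior; at the endpoints $x=0,1$ we have $\gamma_2\equiv0$ by the boundary condition, and the Neumann condition $\tau(0,t)=(1/2,\sqrt3/2)$, $\tau(1,t)=(1/2,-\sqrt3/2)$ guarantees $\gamma_2(x,t)>0$ for $x$ slightly inside $(0,1)$ as long as the curve stays embedded. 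For injectivity, the standard tool is Angenent's result that the number of self-intersections of a solution of a parabolic curve flow is nonincreasing; since the network is symmetric across the $x$-axis, I would apply the intersection-counting/avoidance argument to $\gamma(t)$ together with its specular copy $\gamma^{\rm sp}(t)$ and to $\gamma(t)$ with itself, using the Neumann angle conditions at the junctions to control what happens at the boundary (the argument here is essentially the one already used for the comparison principle in the proof of Theorem \ref{th:exist}, case 2). So part (i) reduces to invoking the preservation of embeddedness for the symmetric network, together with positivity as above.

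For part (ii), the formula $|\regione(\gamma(t))|=-\frac{4\pi}{3}t+|\regione(\inidat)|$ is a direct computation of $\frac{d}{dt}|\regione(\gamma(t))|$. Parametrize the boundary of $\regione(\gamma(t))$ as $\gamma([0,1],t)\cup\gamma^{\rm sp}([0,1],t)$ and write the area by the divergence/Green formula. Differentiating in time and using that each boundary point moves with normal velocity $\kappa$ (the tangential component $\lambda$ and the sliding of the endpoints along the $x$-axis do not change the area to first order, as they are tangential to the boundary), one gets
\begin{equation*}
\frac{d}{dt}|\regione(\gamma(t))| = -\int_{\partial\regione(\gamma(t))}\kappa\,ds
= -2\int_{I(t)}\kappa_{\gamma(t)}\,ds,
\end{equation*}
the factor $2$ coming from the symmetric copy and the sign from the orientation convention (Remark \ref{rem:conve}: with our convention $\regione$ lies locally to the right of $\gamma$). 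Then I would evaluate $\int_{I(t)}\kappa_{\gamma(t)}\,ds$ by the turning-angle argument: $\int_{I(t)}\kappa\,ds = \langle$ total rotation of $\tau$ from $x=0$ to $x=1\rangle$, and since $\tau(0,t)=(1/2,\sqrt3/2)$ makes angle $\pi/3$ with $e_1$ while $\tau(1,t)=(1/2,-\sqrt3/2)$ makes angle $-\pi/3$, and the curve is embedded with $\regione$ on its right, the tangent turns by $-2\pi/3$; hence $\int_{I(t)}\kappa_{\gamma(t)}\,ds=-2\pi/3$, giving $\frac{d}{dt}|\regione(\gamma(t))|=-4\pi/3$, and integrating yields the claim.

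The main obstacle I anticipate is the rigorous justification of the turning-angle value $-2\pi/3$: one must make sure the curve is embedded (hence part (i) genuinely must come first), that its image together with its specular copy bounds a topological disk whose boundary is a single Jordan curve with exactly the two corners at $(0,0)$ and $(1,0)$ of interior angle $2\pi/3$ each, and then apply Gauss--Bonnet (or the Hopf \emph{Umlaufsatz} for piecewise-$C^1$ Jordan curves) carefully accounting for the exterior angles $\pi/3$ at each corner: $\int_{\partial\regione}\kappa\,ds + \sum(\text{exterior angles}) = 2\pi$ forces $\int_{\partial\regione}\kappa\,ds = 2\pi - 2\cdot\frac{\pi}{3} = \frac{4\pi}{3}$ in absolute value, consistent with the above. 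A secondary technical point is to confirm that the boundary-endpoint motion and the tangential component $\lambda$ really contribute nothing to $\frac{d}{dt}|\regione(\gamma(t))|$; this is because the endpoints slide along $\partial\regione$ itself (the $x$-axis portions of the network), so the first variation of area sees only the normal speed $\kappa$. Everything else is routine.
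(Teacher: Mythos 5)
Your part (ii) argument is essentially the paper's own (the paper just asserts $\int_{I(t)}\kappa_{\gamma(t)}\,ds=-\tfrac{2}{3}\pi$ directly in \eqref{met}, leaving implicit the turning-angle/Gauss--Bonnet reasoning you spell out). Note, however, a self-cancelling sign slip: with the paper's conventions (Remark \ref{rem:conve}) $\nu$ is the \emph{outward} normal to $\regione(\gamma(t))$ along $\gamma$, so the first variation of area is $\frac{d}{dt}|\regione(\gamma(t))|=+2\int_{I(t)}\kappa_{\gamma(t)}\,ds$ with no extra minus sign; you introduce a spurious minus in $-\int_{\partial\regione}\kappa\,ds$ and compensate by quoting the unsigned turning angle $4\pi/3$ at the end, so the final answer $-4\pi/3$ is right but the intermediate bookkeeping is inconsistent.

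For part (i) you take a genuinely different route. The paper does not use a maximum principle for $\gamma_2$ or Angenent's intersection-counting theorem; instead it introduces the Hamilton/Huisken-type isoperimetric ratio
\[
g_\gamma(t)=\min\big(Q_1^\gamma(t),Q_2^\gamma(t)\big),
\]
where $Q_1^\gamma$ is a chord-area ratio for $\gamma$ alone and $Q_2^\gamma$ is the analogous ratio involving the specular copy, and then invokes \cite[Prop. 4.4]{MNT:04} to deduce that $g_\gamma$ is increasing whenever $g_\gamma<4\sqrt3$, hence bounded below by $\min(g_\gamma(0),4\sqrt3)>0$ for all $t$; both injectivity and positivity of $\gamma_2$ follow simultaneously from $g_\gamma(t)>0$. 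Your approach is not wrong in spirit, but you should be aware of two things. First, Angenent's intersection-number theorem is stated for curves without the present Neumann boundary conditions at triple junctions, and adapting the count of self-intersections near the two junctions (where $\gamma$ meets the horizontal half-lines at $\pi/3$) is precisely where the delicacy lies — it is not the same as the comparison argument in Theorem \ref{th:exist}, which compares $\gamma$ against an \emph{external} barrier, not against itself. Second, and more importantly from the standpoint of the whole paper, the functional $g_\gamma$ is not a throwaway tool: its scale invariance \eqref{eq:invresc} and lower bound \eqref{eq:gmon} are reused verbatim in the blow-up analyses (step 4 of Theorem \ref{th:singuno}, steps 4 and 5 of Theorem \ref{teosingII}) to rule out the grim reaper and non-embedded limits. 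If you prove (i) by intersection counting you will still need $g_\gamma$ (or an equivalent) later, so the paper's choice is the economical one.
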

\begin{proof}
Let $\delta := \sup \{t\in [0,T) : \gamma(t) {\rm ~is~ injective~ for~}
t\in [0,\delta)\}$. By \eqref{asspos}
and the smoothness of the evolution 
it follows that $\delta >0$. 
Given $(x,y,t)\in [0,1]^2\times [0,\delta)$ with $x< y$, 
let $S(x,y,t)$ be the relatively open segment 
 connecting
$\gamma(x,t)$ with $\gamma(y,t)$. Provided
$S(x,y,t)\cap \gamma([x,y], t)) = \emptyset$,
 we let $A^\gamma(x,y,t)$ be the subset of $\R^2$ 
bounded by  $\gamma([x,y],t)$ 
and $S(x,y,t)$.

Given $x,y \in [0,1]$ and $t \in [0,\delta)$, 
let also $\Sigma(x,y,t)$ be the relatively open segment 
 connecting
$\gamma(x,t)$ with $\gammaspec(y,t)$. Provided
$\Sigma(x,y,t)\cap  \partial \regione(\gamma(t))=\emptyset$, we have that 
either 
$\regione(\gamma(t)) \setminus \Sigma(x,y,t)$ is the union 
of two connected regions, or
$\big(\R^2 \setminus \regione(\gamma(t))\big) \setminus \Sigma(x,y,t)$ is the union 
of two connected regions.
We denote by 
$A_{\min}^\gamma(x,y,t)$ the region of minimal area
among  these two regions.

We define the function $g_\gamma:[0,\delta) \to [0,+\infty)$ as 
follows: for $t\in [0,\delta)$, 
\begin{equation}\label{def:g}
g_\gamma(t) := \min\big( Q_1^\gamma(t), Q_2^\gamma(t) \big),
\end{equation}
where 
\begin{eqnarray}
\label{defQ1}
Q_1^\gamma(t) & := & 
\inf_{x,y\in [0,1],\,x<y, S(x,y,t)\cap \gamma([x,y], t) = \emptyset} 
\frac{\left|\gamma(x,t)-\gamma(y,t)\right|^2}{\vert A^\gamma(x,y,t)\vert},
\\
\label{defQ2}
Q_2^\gamma(t) & := &
\inf_{x,y\in [0,1], \Sigma(x,y,t)\cap \partial \regione(\gamma(t))=
\emptyset} 
\frac{\left|\gamma^{{\rm sp}}(x,t)-\gamma(y,t)
\right|^2}{\vert A_{\min}^\gamma(x,y,t)\vert}.
\end{eqnarray}
Note that $g_\gamma$ is invariant under rescalings of 
$\gamma$, i.e., 
\begin{equation}\label{eq:invresc}
\vartheta > 0 \Rightarrow
g_{\vartheta \gamma}(t) = g_{\gamma}(t), \qquad t \in [0,\delta).
\end{equation}
By assumption \eqref{asspos}
it follows that 
\begin{equation}\label{gposo}
g_\gamma(0) >0.
\end{equation}
{}From \cite[Prop. 4.4]{MNT:04} it follows that 
$g_\gamma$ is increasing 
in every time subinterval of $[0,\delta)$ 
where it is strictly less than $4\sqrt{3}$.
In particular \eqref{gposo} implies
\begin{equation}\label{eq:gmon}
 g_\gamma(t)\ge \min\left(g_\gamma(0), 4 \sqrt{3}\right), \qquad t\in [0,\delta).
\end{equation}
{}From \eqref{eq:gmon} 
it follows that $\delta = T$, and (i) is proved.

Finally 
\begin{equation}\label{met}
\frac{1}{2} 
\frac{d}{dt} 
\vert \regione(\gamma(t))\vert = \int_{I(t)} \kappa_{\gamma(t)}\,ds = -\frac 2 3 \pi,
\end{equation}
which gives \eqref{eqvol}.
\end{proof}

\subsection{Type I singularities}
As usual in the blow-up analysis of type I singularities,
let us define the parameter $\npt$  as 
\[
 t(\npt):= T-e^{-2\npt}\,,
\qquad\ \npt
\in \left[-\frac{1}{2}\log T,+\infty\right).
\]
Given a point $p = (p_1,p_2) \in \R^2$ set also
\[
\widetilde\gamma^p(\npt) 
:= \frac{\gamma(t(\npt))-p}{\sqrt{2(T-t(\npt))}}\,, 
\qquad\ \npt
\in \left[-\frac{1}{2}\log T,+\infty\right).
\]
We let
$\widetilde I(\npt) := [0,L(\gammaresc(\npt))]$,
\begin{equation}\label{eq:nottilde}
\widetilde \tangvers(\npt) := \gammaresc_s(
\npt), 
\quad \widetilde \nu(\npt)
:= (-\widetilde \tangvers_2(\npt),
\widetilde \tangvers_1(\npt)) = \nu_{\gammaresc(\npt)}, \quad
\kappa_{\widetilde\gamma(\npt)} :=
\langle \frac{\gammaresc_{xx}(\npt)}{
\vert \gammaresc_x(\npt)
\vert^2}, \widetilde
\nu(\npt)\rangle, 
\end{equation}
$\widetilde \kappa(x,\npt) = \kappa_{\gammaresc(\npt)}(x)$,
and 
\begin{equation}\label{eqnottt}
\widetilde \lambda(\npt) :=
\langle \frac{\gammaresc_{xx}(\npt
)}{\vert \gammaresc_x(\npt
)\vert^2}, \widetilde
\tangvers(\npt)\rangle.
\end{equation}

Notice that $\widetilde\gamma$ satisfies the 
forced curvature flow equation
\begin{equation}\label{questaserviraforced}
\gammaresc_\npt
= \sqrt{2(T-t(\npt))}\, \gamma_t +\gammaresc
= \widetilde\kappa\widetilde\nu 
+ \widetilde\lambda\widetilde\tau + \gammaresc,
\end{equation}
coupled with the boundary conditions 
$\widetilde \gamma_2(0) = 
\widetilde \gamma_2(1) = \frac{-p_2}{\sqrt{2(T-t(\npt))}}$, and the 
usual Neumann boundary conditions 
\begin{equation}\label{lesolite}
\frac{\widetilde \gamma_x(0)}{\vert \widetilde \gamma_x(0)\vert} = 
\Big(
 \dfrac 1 2,\dfrac{\sqrt 3}{2}\Big), \qquad 
\frac{\widetilde \gamma_x(1)}{\vert \widetilde \gamma_x(1)\vert} = \Big(
\dfrac 1 2,- \dfrac{\sqrt 3}{2}\Big).
\end{equation}
As a consequence
by a direct computation (see \cite[formulae (2.7), (65), (66)]{MNT:04}) 
and using \eqref{questaserviraforced}
we get 
\begin{eqnarray}\label{questaservira}
\nonumber 
\widetilde\kappa_\npt
&=& \widetilde\kappa_{ss} 
+ \widetilde\lambda\widetilde\kappa_{s} + \left(\widetilde\kappa^2-1\right)\widetilde\kappa, 
\\
\widetilde\lambda_\npt 
&=& \widetilde\lambda_{ss}-\widetilde\lambda\widetilde\lambda_{s}-2\widetilde\kappa\widetilde\kappa_{s}
+ \left(\widetilde\kappa^2-1\right)\widetilde\lambda, 
\end{eqnarray}
Therefore, 
 letting 
$\widetilde w := \widetilde\kappa^2 + \widetilde\lambda^2$,
we find
\begin{equation}
\label{questaservirastr}
\begin{aligned}
\widetilde w_\npt
=& 
\widetilde w_{ss} - \widetilde\lambda\widetilde w_{s} 
+2 \left(\widetilde\kappa^2-1\right)\widetilde w
- 2 \left( \widetilde\kappa_s^2+\widetilde\lambda_s^2\right)
\\
\leq & 
\widetilde w_{ss} - \widetilde\lambda\widetilde w_{s} 
+2 \left(\widetilde\kappa^2-1\right)\widetilde w.
\end{aligned}
\end{equation}

In this section we  prove the 
following result, whose mainly follows 
the lines in \cite{MNT:04} (given for one triple
junction only), except for the arguments in step 8.

\begin{thm}\label{th:singuno}
Assume that $\inidat$ satisfies (A) and \eqref{asspos}. If $\gamma$
develops a type I singularity at $t=T$, then 
\begin{equation}\label{TE}
T = \frac{3\vert \regione(\inidat)\vert}{4\pi}, \qquad 
\lim_{t \to T^-} \vert \regione(\gamma(t))\vert =0,
\end{equation}
and
\begin{equation}\label{eqLbis}
\lim_{t\to T^-}L(\gamma(t))=0,
\end{equation}
so that $T$ is the extinction time of the evolution.
Moreover
\begin{itemize}
\item[-] there exists $t_c \in (0,T)$
such that $\gamma(t)$ is uniformly convex in $[0,1]$
for any $t \in [t_c,T)$;
\item[-] there exists  $p \in \R^2$ such that 
\begin{equation}\label{convgammah}
\lim_{\npt \to +\infty}\Vert \gammaresc^p(\npt)- \gamma^{{\rm h}} 
\Vert_{\C^2([0,1];\R^2)}=0.
\end{equation}
\end{itemize}  
\end{thm}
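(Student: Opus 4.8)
The plan is to run the standard type I blow-up analysis adapted to the two-endpoint Neumann problem, following \cite{MNT:04} and \cite{Hu:90}, and to use the comparison geometry already set up in Theorem \ref{th:exist}. First I would establish \eqref{TE} and \eqref{eqLbis}. Since $\inidat$ is embedded, Proposition \ref{prop:embedded}(ii) gives $\vert \regione(\gamma(t))\vert = \vert \regione(\inidat)\vert - \frac{4\pi}{3}t$, so the area vanishes at the time $T_0 := \frac{3\vert \regione(\inidat)\vert}{4\pi}$, and clearly $T\le T_0$. To get $T = T_0$ one argues that the evolution cannot stop before the area is exhausted: under the type I hypothesis \eqref{eqsI} the curvature blows up no faster than $C/\sqrt{2(T-t)}$, hence by the length estimate \eqref{jelka}--\eqref{stimaelle} combined with \eqref{LC} one controls $L(\gamma(t))$ and the geometry well enough to extend the smooth solution up to the area extinction time; therefore $T=T_0$ and $\vert \regione(\gamma(t))\vert \to 0$. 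Then \eqref{eqLbis} follows because an embedded lens-shaped region with fixed $\pi/3$ Neumann angles and vanishing area must have vanishing length (the isoperimetric-type inequality encoded in $g_\gamma \ge \min(g_\gamma(0), 4\sqrt3)>0$ from Proposition \ref{prop:embedded} forces $L(\gamma(t))^2 \le C\, \vert \regione(\gamma(t))\vert \to 0$). Hence $T$ is the extinction time.

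Next I would perform the parabolic rescaling around the singular point. Using the type I bound \eqref{eqsI}, the rescaled flow $\gammaresc^p(\npt)$ solves the forced equation \eqref{questaserviraforced}, with $\widetilde w = \widetilde\kappa^2 + \widetilde\lambda^2$ satisfying the differential inequality \eqref{questaservirastr}. The key monotonicity input is Huisken's monotonicity formula (in the network-adapted form of \cite{MNT:04}): the Gaussian-weighted length is nonincreasing, so along a subsequence $\npt_j\to+\infty$ the rescaled curves converge (after choosing $p$ to be the point where curvature concentrates, which exists and is unique because $\vert\regione(\gamma(t))\vert\to 0$ pins the diameter to order $\sqrt{T-t}$) in $\C^2$ to a limit curve $\gamma_\infty$ that is a self-shrinker for the network flow with the $\pi/3$ Neumann conditions \eqref{lesolite}, i.e. satisfies $\kappa_{\gamma_\infty} + \langle \gamma_\infty, \nu_{\gamma_\infty}\rangle = 0$ as in \eqref{eq:homote}. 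Embeddedness passes to the limit via \eqref{eq:gmon}, so $\gamma_\infty$ is an embedded homothetically shrinking lens solution; by the uniqueness statement of Section \ref{rem:homotetico} (the only input borrowed from \cite{SS:08}), $\gamma_\infty = \gamma^{{\rm h}}$. Since the limit is unique and $\inf \kappa_{\gamma^{{\rm h}}}>0$, the convergence upgrades from subsequential to full: $\gammaresc^p(\npt)\to\gamma^{{\rm h}}$ in $\C^2([0,1];\R^2)$, which is \eqref{convgammah}.

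Finally, the uniform convexity for $t$ near $T$ follows from \eqref{convgammah}: since $\kappa_{\gamma^{{\rm h}}}\ge \inf_{(0,1)}\kappa_{\gamma^{{\rm h}}}>0$ and $\C^2$ convergence of the rescaled curve transfers to a lower bound $\kappa_{\gamma(t)} \le -c/\sqrt{2(T-t)}<0$ (recall the sign convention of Remark \ref{rem:conve}: the region lies on the right, so convexity means $\kappa_{\gamma(t)}$ strictly negative), there is $t_c\in(0,T)$ with $\gamma(t)$ uniformly convex on $[0,1]$ for $t\in[t_c,T)$; near the two endpoints one uses \eqref{behh}, which already forbids an interior-style maximum of $\kappa$ there, together with the Neumann angle condition, to see the curvature stays bounded away from zero up to $x=0,1$ as well.

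I expect the main obstacle to be the second part: proving that the blow-up limit is a genuine embedded shrinking \emph{lens} (rather than a line, a ray, or a degenerate configuration) and that the full — not merely subsequential — convergence holds. Concretely, one must (a) rule out the trivial limit by showing the curvature genuinely concentrates, using the area identity \eqref{eqvol} to prevent the rescaled region from collapsing; (b) verify that the Neumann boundary conditions \eqref{lesolite} are preserved under the $\C^2$ limit and that no boundary curvature blows up faster than the interior, for which \eqref{behh} and \eqref{eqkappasdiffdue} are the essential structural facts; and (c) promote subsequential to full convergence, which needs the Łojasiewicz–Simon–type argument (or the uniqueness of the shrinker plus a connectedness/continuity argument on the $\omega$-limit set). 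This is exactly the step where the analysis departs from \cite{MNT:04}, so it will require the most care.
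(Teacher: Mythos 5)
There is a genuine gap at the very first step. You claim that $T = \tfrac{3|\regione(\inidat)|}{4\pi}$ follows because the type~I bound plus the length estimate \eqref{stimaelle} ``controls the geometry well enough to extend the smooth solution up to the area extinction time.'' This is the wrong mechanism: the type~I bound $\|\kappa_{\gamma(t)}\|_{L^\infty} \le C/\sqrt{2(T-t)}$ is a bound on the \emph{rate} of blowup, not a bound that prevents blowup; the curvature does in fact diverge at $T$ (see \eqref{linftyscoppia}), so no argument of the form ``bounded curvature $\Rightarrow$ continuation'' can be run. What the paper actually uses here is \cite[Th.~6.23]{MNT:04}: under \eqref{asspos} and $\inf_{t\in[0,T)}L(\gamma(t))>0$, no type~I singularity can form. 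The contrapositive gives $\liminf_{t\to T^-}L(\gamma(t))=0$; combined with monotonicity of the area and \eqref{eqvol} this yields $|\regione(\gamma(t))|\to 0$ and hence $T = \tfrac{3|\regione(\inidat)|}{4\pi}$. This lemma is the crux of \eqref{TE} and \eqref{eqLbis}, and your proposal does not invoke it or supply any substitute that rules out a type~I singularity occurring with positive residual length.

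A second missing ingredient is the dichotomy on the rescaled lengths, which the paper handles in step~7: one must separately treat the case $\sup_\npt L(\gammaresc(\npt))=+\infty$, in which the blow-up limit along a subsequence has infinite length and is, after elliptic regularity and the classification of \cite{AbLa:86}, a half-line; embeddedness via the lower bound \eqref{ohai} on $g_\gamma$ (which is scale-invariant, \eqref{eq:invresc}) is then used to derive a contradiction. Your proposal jumps directly to ``the blow-up limit is an embedded shrinking lens,'' silently assuming the rescaled lengths stay bounded, which must be proved. Two more minor remarks: the upgrade from subsequential to full convergence in the paper is not a \L ojasiewicz--Simon argument but the elementary estimate $\|\gammaresc(r_j)-\gammaresc(\npt_{n_j})\|_{\C^0}\le C/j$ (step~6), using boundedness of $\widetilde\kappa,\widetilde\lambda,\gammaresc$; and while your route to \eqref{eqLbis} via an isoperimetric-type inequality $L(\gamma(t))^2\le C|\regione(\gamma(t))|$ derived from $g_\gamma\ge c_0>0$ is plausible, the paper instead uses the doubling bound $L(\gamma(b))\le c\,L(\gamma(a))$ from the comparison argument of Proposition~\ref{rem:lunghezza}, which avoids having to make the isoperimetric estimate precise.
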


\begin{proof}
Let us assume that \eqref{eqsI} holds. 
From \cite[Th. 6.23]{MNT:04} it follows that, 
if we assume
\eqref{asspos}  and 
if in addition  $\inf_{t \in [0,T)}L(\gamma(t))>0$,
then $\gamma$  cannot develop type I singularities at $t=T$. 
Therefore
\begin{equation}\label{eq:liminf}
\liminf_{t\to T^-}
L(\gamma(t))=0.
\end{equation}
Using \eqref{eq:liminf} and the fact that $t \in [0,T) \to \vert
\regione(\gamma(t))\vert$ is decreasing (see
 Proposition \ref{prop:embedded} (ii)) 
it follows that $\lim_{t \to T^-} \vert \regione(\gamma(t))\vert=0$. 
In particular, from \eqref{eqvol} we have 
$T\le \frac{3 \vert \regione(\inidat)\vert}{4\pi}$, 
and the equality holds if and only if $\lim_{t \to T^-}
\vert \regione(\gamma(t))\vert =0$.
To prove \eqref{eqLbis}, we observe that, as in the proof of 
Proposition \ref{rem:lunghezza} and since the constant
$c$ in that statement is independent of $\gamma$, given $a,b \in (0,T)$ with $a<b$,
we have $L(\gamma(b)) \leq c L(\gamma(a))$, with 
$c>0$ independent of $a$ and $b$. This observation, coupled with  
\eqref{eq:liminf}, proves \eqref{eqLbis}.

{}From \eqref{eqLbis} 
and recalling the comparison argument used 
in the proof of Theorem \ref{th:exist}, we deduce that for any $x \in [0,1]$
there exists the limit 
$\lim_{t\to T^-}\gamma(x,t) \in \R^2$. Moreover,
by \eqref{eqLbis} such a limit is independent of $x$. 
We can therefore define
\begin{equation}\label{defp}
p:=
\lim_{t\to T^-}\gamma(x,t) \in \R^2.
\end{equation}
Set 
$$
\gammaresc
:= \gammaresc^p.
$$
Recalling the notation in \eqref{eq:nottilde}, thanks to \eqref{eqsI}
\begin{equation}\label{curvaequi}
|\widetilde\kappa(x,\npt
)|=\sqrt{2(T-t(\npt))}\,|\kappag(x,t(\npt
))|\le C, 
\qquad \npt \in \left[-\frac{1}{2} \log T, +\infty\right), \ 
x \in [0,1].
\end{equation}
We now divide the proof of the theorem into seven steps.

\medskip

\noindent
{\it Step 1}. 
We have 
\begin{equation}\label{fattouno}
\gammaresc(0,\npt),
\gammaresc(1,\npt) \in
 B_{\frac{2C}{\sqrt{3}}}(p),
\qquad \npt
 \in \left[-\frac{1}{2} \log T,+\infty\right),
\end{equation}
where 
$B_{\frac{2C}{\sqrt{3}}}(p)$ is the ball of radius
$\frac{2C}{\sqrt{3}}$ centered at $p$.

Indeed, since $-\kappag(0,\sigma) = \frac{\sqrt{3}}{2} \vert\gamma_t(0,\sigma)\vert$
for any $\sigma \in (0,T)$, 
using \eqref{curvaequi} we have 
\begin{eqnarray*}
 |\gammaresc(0,\npt
)| &=& 
\frac{1}{\sqrt{2(T-t(\npt
))}}\vert
\int_{t(\npt)}^T \gamma_t(0,\sigma)\,d\sigma \vert
\\
&\le& \frac{2}{\sqrt 3\,\sqrt{2(T-t(\npt))}} \int_{t(\npt)}^T 
\left| \kappag(0,\sigma)\right|\,d\sigma
\\
&\leq& \frac{2C}{\sqrt{3}\sqrt{2(T- t(\npt
))}}
\int_{t(\npt
)}^{T} 
\frac{1}{\sqrt{2(T-\sigma)}}~d\sigma 
= 
\frac{2C}{\sqrt{3}}.
\end{eqnarray*}
Since the same estimate holds for $|\gammaresc(1,\npt
)|$,
step 1 is proved.

\medskip

\noindent
{\it Step 2}. We have
\begin{equation}\label{fattodue}
\vert \regione(\gammaresc(\npt
))\vert  = \frac{4\pi}{3},\qquad
\npt
 \in 
\left[-\frac{1}{2} \log T,+\infty\right).
\end{equation} 

Indeed, from \eqref{eqvol} 
and \eqref{TE} 
it follows that 
$\vert \regione(\gamma(t))\vert = \frac{4\pi}{3}(T-t)$, and therefore
\eqref{fattodue} follows from the definition of $\gammaresc$.

\medskip

Without loss of generality, from now on we  assume $p=(0,0)$. 
We recall
the so-called {\it rescaled monotonicity formula} 
(see \cite{Hu:90}, \cite[Prop. 6.7]{MNT:04}):
\begin{equation}\label{monotone}
\frac{d}{d \npt
}\,\int_{\widetilde I(\npt
)} 
e^{-\frac{|\gammaresc(\npt
)|^2}{2}}\,ds
= 
 - \int_{\widetilde I(\npt
)} 
e^{-\frac{|\gammaresc(\npt
)|^2}{2}}\,\left| 
\kappa_{\gammaresc(\npt
)} + 
\langle \gammaresc(\npt
),\nu_{\gammaresc(\npt
)}\rangle\right|^2\,ds =: - f(\npt) 
\,\le\,0\,.
\end{equation}
Integrating \eqref{monotone} on $[-\frac{1}{2}\log T,+\infty)$ we get
\[
\begin{aligned}
\int_{-\frac{1}{2}\log T}^{+\infty}
f(\npt) ~d\npt
 = & 
\int_{\widetilde I(-\frac{1}{2} \log T)}
e^{-\frac{\vert \gammaresc(-\frac{1}{2} \log T)\vert^2}{2}} ~ds 
= \frac{1}{\sqrt{2T}} \int_{I(0)}
e^{-\frac{\vert \gamma(0)\vert^2}{4 T}} ~ds <+\infty.
\end{aligned}
\]
As a consequence, the nonnegative  function $f$
belongs to $L^1([-\frac{1}{2} \log T, +\infty))$.
Since $\sum_{j=1}^{+\infty} \frac{1}{j} = +\infty$,
we then have 
that for any sequence $\{\npt
_n\} \subset (-\frac{1}{2} \log T, +\infty)$ 
converging to $+\infty$, 
there exist a subsequence $\{\npt
_{n_j}\}$ and times $r_j\in 
[\npt
_{n_j},\npt
_{n_j}+1/j]$ such that 
\begin{equation}\label{rlim}
\lim_{j\to+\infty} f(r_j)
=0.
\end{equation}

\medskip

Assume now that 
\begin{equation}\label{fattotre}
\sup_{\npt
 \in 
[-\frac{1}{2} \log T,+\infty)} L(\gammaresc(\npt
)) < 
+\infty.
\end{equation} 

\medskip

\noindent
{\it Step 3}. Weak convergence to $\gamma^\infty$ in $W^{2,\infty}$ along 
a subsequence $\{r_{j^{}_k}\}$.

{}From \eqref{curvaequi} and assumption \eqref{fattotre} we have that 
\[
\sup_j \left[L(\gammaresc(r_j)) 
+ \Vert \kappa_{\gammaresc(r_j)}\Vert_{L^\infty([0,1])}\right] < +\infty.
\]
It follows that there exist a subsequence $\{r_{j^{}_k}\}$
and a map 
\begin{equation}\label{maplim}
\gamma^\infty \in W^{2,\infty}([0,1];\R^2),
\end{equation}
such that $\gammaresc({r_j}_k)$ converges to $\gamma^\infty$
weakly in $W^{2,\infty}([0,1];\R^2)$ as $k \to +\infty$. In particular
\begin{equation}\label{manu}
\lim_{k \to +\infty}
\Vert 
\gammaresc({r_j}_k) - \gamma^\infty
\Vert_{
\C^1([0,1]; \R^2)
}
=0,
\end{equation}
and 
\begin{equation}\label{redd}
\lim_{k \to +\infty} \gammaresc_{xx}(r_{j^{}_k}) = \gamma^\infty_{xx} \qquad
{\rm weakly~ in~} L^2([0,1];\R^2).
\end{equation}
Hence from steps 1,2,3 and \eqref{fattotre} it follows that 
\begin{itemize}
\item[]
\item[(i)] 
$\gamma^\infty(0), \gamma^\infty(1) 
\in B_{\frac{2C}{\sqrt{3}}}(0)$, and 
$\gamma^\infty(0), \gamma^\infty(1)$ belong to the first coordinate
axis;
\item[]
\item[(ii)] 
$\frac{\gamma^\infty_x(0)}{\vert \gamma^\infty_x(0)\vert} = 
\Big( \dfrac 1 2,\dfrac{\sqrt 3}{2}\Big)$, 
$\frac{\gamma_x^\infty(1)}{\vert \gamma_x^\infty(1)\vert} = 
\Big(\dfrac 1 2,-\dfrac{\sqrt 3}{2}\Big)$;
\item[]
\item[(iii)] $\vert \regione(\gamma^\infty)\vert = \frac{2\pi}{3}$;
\item[]
\item[(iv)] $L(\gamma^\infty) < +\infty$.
\end{itemize}
\medskip 
Moreover, as a consequence of (iii), and respectively of 
(ii), (iv) and \eqref{maplim}, we have
\begin{itemize}
\item[(v)] $L(\gamma^\infty) >0$;
\item[(vi)] $\kappa_{\gamma^\infty}$ is not 
identically zero.
\end{itemize}

\medskip

\noindent
{\it Step 4}. We have 
\begin{itemize}
\item[]
\item[(vii)] $\gamma^\infty_2(x) >0$ for any $x \in (0,1)$;
\item[(viii)]
$\gamma^\infty$ is injective.
\end{itemize}
Indeed, from \eqref{eq:invresc} and \eqref{eq:gmon} we have 
\begin{equation}\label{ohai}
g_{\gammaresc} (\npt
) = g_\gamma(t(\npt))\geq 
\min(g_\gamma(0), 4 \sqrt{3}), \qquad \npt \in \left[-\frac{1}{2} \log T, +\infty
\right).
\end{equation}
Moreover, since $g_{\gammaresc}(\npt)$ is defined as an 
infimum, it is upper semicontinuous, in the sense that
\begin{equation}\label{quant}
\lim_{k \to +\infty} 
\Vert \gammaresc(r_{j^{}_k}) - 
\gamma^\infty\Vert_{\C^1([0,1];\R^2)} =0
\Rightarrow 
g_{\gamma^\infty}
\geq \limsup_{k \to 
+\infty}
g_{\gammaresc} (r_{j^{}_k}),
\end{equation}
where $g_{\gamma^\infty}$ is (the constant) defined as in \eqref{def:g}, 
where we substitute $\gamma(\cdot,t)$ with $\gamma^\infty(\cdot)$ 
on the right hand side of \eqref{defQ1}.
{}From \eqref{ohai} and \eqref{quant} it follows that 
$g_{\gamma^\infty} \geq \min(g_\gamma(0), 4 \sqrt{3})$,
and this implies (vii) and (viii).

\medskip
As a consequence of (ii) and (viii) we have:

\begin{itemize}
\item[(ix)] $\gamma^\infty(0) \neq  \gamma^\infty(1)$.
\end{itemize}

\medskip

\noindent
{\it Step 5}. We have 
\begin{equation}\label{omofob} 
\kappa_{\gamma^\infty} + \langle \gamma^\infty,\nu_{\gamma^\infty}\rangle\,=0 
\qquad  {\rm a.e. ~in}~ [0,1].
\end{equation} 
Indeed, from Fatou's Lemma and \eqref{rlim} we have 
\begin{equation}\label{semi}
\int_0^1 \liminf_{j \to +\infty}
\left[
e^{-\frac{|\gammaresc(r_j)|^2}{2}}\,\left| 
\kappa_{\gammaresc(r_j)} + 
\langle \gammaresc(r_j),\nu_{\gammaresc(r_j)}\rangle\right|^2
\vert \gammaresc_x(r_j)\vert\right]\,dx
\leq \lim_{j \to +\infty} f(r_j) =0.
\end{equation}
On the other hand, 
by \eqref{manu} and \eqref{redd},
the left hand side of \eqref{semi} equals
\begin{equation}\label{cont}
\int_0^{L(\gamma^\infty)} 
e^{-\frac{|\gamma\infty|^2}{2}}\,\left| 
\kappa_{\gamma^\infty} + 
\langle \gamma^\infty,\nu_{\gamma^\infty}\rangle\right|^2
\,ds,
\end{equation}
and \eqref{omofob} follows.

\medskip 

By elliptic regularity \cite{GiTr:83} it follows that
 $\kappa_{\gamma^\infty}
\in \C^0([0,1])$, hence $\gamma^\infty \in \C^2([0,1];\R^2)$, and 
\eqref{omofob} is valid
everyhere in classical sense in $[0,1]$. 
Recalling 
Section \ref{rem:homotetico}, we deduce by uniqueness that 
\begin{equation}\label{lim:identified}
\gamma^\infty = \gamma^{{\rm 
h}}.
\end{equation}
Note that from \eqref{lim:identified} it follows that $\gamma^\infty$ 
is independent of the subsequence $\{j^{}_k\}$, hence \eqref{manu}
is valid for the whole sequence $\{r_j\}$, i.e. 
\begin{equation}\label{manuj}
\lim_{j \to +\infty}
\Vert 
\gammaresc({r_j}) - \gamma^\infty
\Vert_{
\C^1([0,1]; \R^2)
}
=0.
\end{equation}

\noindent
{\it Step 6}.
We have
\begin{equation}\label{stepp}
\lim_{j \to +\infty}
\Vert
\gammaresc(\npt_{n_j}) - \gamma^{{\rm h}}\Vert_{\C^1([0,1];\R^2)}=0.
\end{equation}
{}From step 3
applied to the sequence $\{\gammaresc(\npt_{n_j})\}$ in place
of $\{\gammaresc(r_j)\}$, it follows that there exist
a map 
$$
\gammaresc^\infty \in W^{2,\infty}([0,1]; \R^2)
$$
and a subsequence $\{n_{j^{}_h}\}$ 
such that 
\begin{equation}\label{lisar}
\lim_{h \to +\infty}
\Vert 
\gammaresc(\nuovoparametrotemporale_{n_{j^{}_h}}) - \gammaresc^\infty
\Vert_{
\C^1([0,1]; \R^2)
}
=0,
\end{equation}
and
such that  $\gammaresc^\infty$ satisfies 
properties (i)-(ix) listed in steps 3,4.

In order to show \eqref{stepp}, it is enough to 
prove that 
\begin{equation}\label{casi}
\gammaresc^\infty = \gamma^{{\rm h}}.
\end{equation}
Using \eqref{lisar}, \eqref{manuj} and the inequality
\begin{equation*}
\begin{aligned}
\Vert \gammaresc^\infty - \gamma^{{\rm h}}
\Vert_{\C^0([0,1];\R^2)}
 \leq
~ &  
\Vert \gammaresc^\infty - 
\gammaresc(\npt_{n_{j^{}_h}})
\Vert_{\C^0([0,1];\R^2)}  
+ 
\Vert 
\gammaresc(\npt_{n_{j^{}_h}})
- 
 \gammaresc(r_{j^{}_h})\Vert_{\C^0([0,1];\R^2)} 
\\
& +
\Vert
\gammaresc(r_{j^{}_h}) - \gamma^{{\rm h}}\Vert_{\C^0([0,1];\R^2)},
\end{aligned}
\end{equation*}
to prove \eqref{casi} it is sufficient to  show that
\begin{equation}
\label{monteporzio}
\lim_{j\to +\infty} 
\Vert \gammaresc(r_j) - \gammaresc(\npt_{n_j})\Vert_{\C^0([0,1];\R^2)} =0.
\end{equation}
In order to prove \eqref{monteporzio},
we recall that $\widetilde\kappa(x,\npt)$ is uniformly bounded for all $(x,
\npt)$ by \eqref{curvaequi} 
and, as a consequence, $\widetilde\lambda(x,\npt)$ is also uniformly bounded 
by \eqref{questaservira} and \eqref{questaservirastr} as in 
\cite[p. 264]{MNT:04}. 
Hence, using also \eqref{questaserviraforced} and \eqref{monotone}, 
\begin{equation*}
\begin{aligned}
\Vert \gammaresc(r_j) - 
\gammaresc(\npt_{n_j})\Vert_{\C^0([0,1];\R^2)} \le & 
\int_{\npt_{n_j}}^{r_j} \int_0^1 \vert\gammaresc_\npt\vert\, dx
\le \int_{\npt_{n_j}}^{r_j} \int_0^1 \left(\vert\widetilde \kappa\vert 
+  \vert\widetilde \lambda\vert + \vert \gammaresc\vert\right)\,dx
\\
\le & C\,\vert r_j-\npt_{n_j}\vert
\le \frac C j\,,
\end{aligned}
\end{equation*}
which gives \eqref{monteporzio} and proves step 6.

\medskip

{}From \eqref{stepp} and \cite[Prop. 6.16]{MNT:04} 
we have the improved
convergence
\begin{equation}\label{improved}
\lim_{j \to +\infty}
\Vert 
\gammaresc(\npt_{n_j}) - \gamma^{{\rm h}}
\Vert_{
\C^2([0,1];\R^2)
}
=0.
\end{equation}
\noindent Since the sequence $\{\npt_{n}\}$ is arbitrary we deduce
\begin{equation}\label{convgammahuno}
\lim_{\npt \to +\infty}\Vert \gammaresc(\npt)- \gamma^{{\rm h}}
\Vert_{\C^2([0,1];\R^2)}=0.
\end{equation}

\smallskip
Eventually, we observe 
that, since $\gamma^\infty_2$ is uniformly concave in $[0,1]$
(see Section \ref{rem:homotetico}), from \eqref{improved} 
we deduce that 
$\gamma(t_c)$ becomes uniformly convex for some $t_c \in (0,T)$. 
{}From the results proved in  \cite[Lemma 3.3]{SS:08} it follows 
that $\gamma(t)$ remains uniformly
convex in $[t_c,T)$
(this 
last assertion also follows
from \eqref{eqcurv} and 
\eqref{eqkappasdiffdue} using the maximum principle).

\medskip

\noindent
{\it Step 7}.
Assume now that \eqref{fattotre} does not hold, that is, there exists a 
sequence $\{\npt_n\}$ converging to $+\infty$ such that 
\begin{equation}\label{cud}
\lim_{n\to +\infty} L(\gammaresc(\npt_n)) = +\infty.
\end{equation}
Resoning as in 
step 1, there exist a subsequence $\{\npt_{n_j}\}$ 
and times $r_j\in [\npt_{n_j},\npt_{n_j}+1/j]$ such that \eqref{rlim} holds.
Moreover
from \eqref{curvaequi} 
and $r_j-\npt_{n_j}\le 1/j$, and from \eqref{jelka} and \eqref{cud} we obtain 
\begin{equation}\label{caf}
\lim_{j\to +\infty} L(\gammaresc(r_j)) = +\infty.
\end{equation}
If we
parametrize $\gammaresc(r_j)$ by arclength on $[0,L(\gammaresc(r_j))]$,
and we pass to the limit as in step 3 
as $j\to +\infty$, 
we get that there exists a subsequence $\{r_{j^{}_k}\}$ such that 
$\{\gammaresc(r_{j^{}_k})\}$
converges weakly in 
$W^{2,2}_{{\rm loc}}([0,+\infty);\R^2)$
(so that \eqref{manu} and \eqref{redd} hold with 
$\mathcal C^1_{\rm loc}([0,+\infty);\R^2)$ in place of 
$\mathcal C^1([0,1];\R^2)$ and 
$L^2_{{\rm loc}}([0,+\infty);\R^2)$ in place of
$L^2([0,1];\R^2)$
respectively)
to a curve $\gamma^\infty$ of infinite length which,
arguing as in steps 4,5, has  the 
following properties:
\begin{itemize}
\item[(a)] $\gamma^\infty_2(0)=0$, $\gamma_2^\infty(s) > 0$ 
for any $s \in (0,+\infty)$;
\item[(b)] $\gamma^\infty_s(0) = 
\Big( \dfrac 1 2,\dfrac{\sqrt 3}{2}\Big)$;
\item[(c)] $\gamma^\infty$ is injective (by using \eqref{ohai});
\item[(d)] $\gamma^\infty$ 
solves 
\eqref{omofob} almost everywhere in $[0,+\infty)$. 
\end{itemize}
By elliptic regularity $\gamma^\infty \in 
\C^\infty([0,+\infty); \R^2)$ 
and solves 
\eqref{omofob} in classical sense. Then 
by the  results in \cite{CG:07} and \cite{SS:08} it follows that
 $\gamma^\infty([0,+\infty)$
is contained in a curve of Abresch-Langer \cite{AbLa:86}. 
In view of the Neumann condition (b) and the properties of the curves of 
Abresch-Langer,
it then follows that
\[
\gamma^\infty(s) = \left( \frac{s}{2}\,,\frac{\sqrt 3 s}{2}\right),
\qquad s\in [0,+\infty).
\]
Similarly, if we parametrize $\gammaresc(r_j)$ 
by arclength on $[-L(\gammaresc(r_j)),0]$,
we find a  subsequence 
$\{\widetilde \gamma(r_{j_{k_\ell}})\}$ of 
$\{\widetilde \gamma(r_{j_k})\}$
converging to a curve
$\gamma^\infty \in \C^\infty((-\infty,0];\R^2)$ 
of infinite length satisfying 
(a), (c), (d), $\gamma_s^\infty(0) = 
(\frac{1}{2}, - \frac{\sqrt{3}}{2})$, and 
contained in a curve of Abresch-Langer. Hence necessarily
\[
\gamma^\infty(s) 
= \left(\frac{s}{2}, -\frac{\sqrt 3 s}{2}\right),
\qquad s\in (-\infty,0].
\]
We now reach a contradiction
since, being the convergence of 
$\{\gammaresc(r_{j^{}_{k_\ell}})\}$ in $\mathcal C^1_{\rm loc}$,
it follows that 
$\gammaresc(r_{j^{}_{k_\ell}})$ 
is not injective for $\ell$ sufficiently large.
Indeed, provided $\ell \in \mathbb N$  is such that
\[
\| \widetilde\gamma(s,r^{}_{j_{k_\ell}})
- \gamma^\infty(s)\|_{\C^1([0,1];\R^2)}
+
\| \widetilde\gamma\left(L(\widetilde\gamma(r^{}_{j_{k_\ell}}))
-s,r^{}_{j_{k_\ell}}\right)
- \widetilde\gamma^\infty(-s)\|_{\C^1([0,1];\R^2)}
\le \frac{1}{2},
\]
recalling the boundary conditions in \eqref{lesolite},
we have that there exist $s_1,s_2\in [0,1]$
such that
\[
\widetilde\gamma(s_1,r^{}_{j_{k_\ell}}) 
= \widetilde\gamma
\left(L(\widetilde\gamma(r^{}_{j_{k_\ell}}))-s_2,r^{}_{j_{k_\ell}}\right).
\]
Hence \eqref{fattotre} necessarily holds, and the proof of the theorem
is complete.
\end{proof}

\section{Embedded nonconvex initial data: type II singularities}
\label{secpostwo}

This section is devoted to the proof of the following result.

\begin{thm}\label{teosingII}
Assume that $\inidat$ satisfies (A) and \eqref{asspos}.
Then 
$\gamma$ cannot develop type II singularities at $t=T$. 
\end{thm}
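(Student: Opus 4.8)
The strategy is the classical Hamilton rescaling argument for type II singularities, as pioneered in \cite{Hu:98} for closed curves, combined with the boundary control established in \eqref{behh}. The plan is to argue by contradiction: assume $\gamma$ develops a type II singularity at $t=T$, perform a type II blow-up, and derive a contradiction with the structure of the limiting eternal solution. First I would recall that by Theorem \ref{th:exist} the flow exists smoothly up to $T<+\infty$ and by \eqref{linftyscoppia} the $L^\infty$-norm of the curvature blows up; a type II singularity means $\limsup_{t\to T^-}\sqrt{2(T-t)}\,\|\kappa_{\gamma(t)}\|_{L^\infty}=+\infty$. Following Hamilton, I would pick, for each integer $m$, a point $(x_m,t_m)\in[0,1]\times[0,T-1/m]$ essentially maximizing the quantity $(T-1/m-t)\,\kappa_\gamma(x,t)^2$, set $\mu_m:=\kappa_\gamma(x_m,t_m)$, rescale parabolically by $\mu_m$ around $(\gamma(x_m,t_m),t_m)$, and reparametrize by arclength. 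Standard estimates (using that $\mu_m\to+\infty$ and that the choice of $(x_m,t_m)$ makes the rescaled curvature bounded by something tending to $1$ on compact time intervals, as in \cite[Theorem 16.4]{Ha:95}) give subconvergence, in $\C^\infty_{\mathrm{loc}}$, to a limit flow $\gamma^\infty(\cdot,\npt)$, $\npt\in(-\infty,+\infty)$, which is an \emph{eternal} solution of curvature flow with $|\kappa_{\gamma^\infty}|\le 1$ everywhere and $|\kappa_{\gamma^\infty}|=1$ attained at the spacetime origin.

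The key structural dichotomy comes from \eqref{behh}: at the endpoints $x=0,1$ the tangential and normal derivatives of the curvature are rigidly tied, and in particular $\kappa_\gamma$ can attain an interior-in-time maximum of $|\kappa_\gamma|$ at $x=0$ or $x=1$ only if both $\kappa_\gamma$ and $\partial_s\kappa_\gamma$ vanish there. So I would split into two cases according to whether the rescaling points $x_m$ stay (along a subsequence) at distance bounded away from $\{0,1\}$ in the \emph{rescaled} arclength, or whether they run into an endpoint. In the first case the blow-up limit is an eternal solution of curvature flow \emph{without boundary} on $\R$ (a complete curve), with curvature bounded by $1$ and equal to $1$ at one point; by Hamilton's theorem on eternal solutions attaining the curvature maximum (see \cite{Hu:98}), $\gamma^\infty$ must be the Grim Reaper. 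This is the standard part. In the second case, the endpoint survives in the limit and the blow-up is an eternal solution on a half-line with the fixed $\pi/3$ Neumann condition on the axis; but here \eqref{behh} is decisive: since $|\kappa_{\gamma^\infty}|$ achieves its maximum $1$, and if that maximum were achieved at the boundary point then \eqref{behh} forces $\kappa_{\gamma^\infty}=0$ there, contradicting $|\kappa_{\gamma^\infty}|=1$; hence the maximum is interior, and again one is reduced to the Grim Reaper (now with a boundary line it must cross — already a geometric tension) or to a lower-dimensional contradiction.

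To close the argument I would show the Grim Reaper cannot arise as a type II blow-up of \emph{this} flow. The cleanest route is an area/length obstruction: by Proposition \ref{rem:lunghezza} the length $L(\gamma(t))$ stays bounded by $c\,L(\inidat)$ for all $t\in[0,T)$, so the rescaled curves $\mu_m L(\gamma(t_m))$ could a priori be large, but more usefully one can use the monotonicity-type / finiteness information: a type II blow-up carries zero Gaussian density in the sense that the rescaled measure (as in the monotonicity formula \eqref{monotone}, suitably adapted) would force the limit to be a self-shrinker, which the Grim Reaper is not — unless one instead invokes the embeddedness preserved by Proposition \ref{prop:embedded}(i) together with the bounded-length bound to rule out a noncompact translating limit inside a shrinking region. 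Concretely: since $|\regione(\gamma(t))|\to 0$ and $L(\gamma(t))$ is bounded, an embeddedness/isoperimetric argument (the quantity $g_\gamma$ of \eqref{def:g}, invariant under rescaling by \eqref{eq:invresc} and bounded below by \eqref{eq:gmon}) survives in the blow-up and is incompatible with the Grim Reaper, which has infinite length and zero isoperimetric-type ratio between far-apart points; this is exactly the mechanism used to reach the contradiction at the end of step 7 in the proof of Theorem \ref{th:singuno}. I expect the main obstacle to be case two, i.e.\ controlling the blow-up when the curvature concentrates \emph{at a triple junction}: one must make sure the Neumann boundary condition passes to the limit together with the $\C^\infty_{\mathrm{loc}}$ convergence and that the rigidity \eqref{behh} is genuinely preserved in the limit (not merely formally), and then argue that an eternal Neumann-boundary solution with bounded curvature, attaining its max, embedded, and sitting in a region of vanishing area cannot exist. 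Once both cases yield a contradiction with \eqref{linftyscoppia}-compatible data, the theorem follows.
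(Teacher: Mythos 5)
Your overall skeleton — Hamilton rescaling, dichotomy on whether the triple junction survives the rescaling, ruling out the Grim Reaper via the scaling-invariant embeddedness quantity $g_\gamma$ from \eqref{def:g}, \eqref{eq:invresc}, \eqref{eq:gmon} — matches the paper's. But there are three genuine gaps that your plan does not bridge, and the third is the one the paper labels as its main technical contribution.

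First, you invoke ``Hamilton's theorem on eternal solutions attaining the curvature maximum'' to conclude the blow-up limit $\gamma_\infty$ is the Grim Reaper, but that theorem applies to \emph{convex} eternal solutions. You never establish that $\kappa_{\gamma_\infty}$ has a fixed sign. The paper's Step 3 is exactly this: it uses the monotonicity of $\int|\kappa_\gamma|\,ds$ from Proposition \ref{prop:tripodi}, transferred to the rescaled curves via \eqref{stanc}--\eqref{otti}, together with Fatou and Angenent's strong maximum principle \cite{An:91}, to show $\kappa_{\gamma_\infty}$ never vanishes. This is a substantive ingredient; without it, the Grim Reaper (or any eternal convex translator) classification is not available. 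Merely noting that $\int|\kappa_\gamma|\,ds$ is monotone (which you do implicitly by citing Prop.\ \ref{prop:tripodi}) is not enough — you need the quantitative decay of the inflection-point contribution along the rescaling sequence.

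Second, your Case 2 argument misreads \eqref{behh}. The boundary identities $\partial_s\kappag(0,t)=\kappag(0,t)^2/\sqrt3\ge0$ and $\partial_s\kappag(1,t)=-\kappag(1,t)^2/\sqrt3\le0$ prevent the \emph{signed} curvature $\kappag$ from attaining a maximum at $x=0$ or $x=1$ unless it vanishes there; they do not prevent $|\kappag|$ from attaining its maximum at an endpoint when $\kappag<0$ there (which is precisely the convex configuration $\kappa_{\gamma_\infty}<0$ analyzed in Steps 5--7). So you cannot conclude ``the maximum is interior, hence Grim Reaper.'' In fact the paper's argument does \emph{not} locate where $|\kappa_{\gamma_\infty}|$ peaks and derive a contradiction from that; it proceeds differently.

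Third, and most importantly, the half-line case $I_\infty=[a_\infty,+\infty)$ (or $(-\infty,b_\infty]$) is not reducible to Hamilton's closed-curve result: the Neumann boundary condition changes the PDE problem, and the paper has to rerun a Harnack-type argument from scratch. Steps 5--8 of the paper: rewrite $\kappa$ in the $(\theta,\npt)$ variables so that \eqref{eqkk} holds, show via the maximum principle applied to $h=\kappa+2(\npt-\npt_1)\partial_\npt\kappa$ with the boundary data \eqref{soffiare}, \eqref{nonimme} that $\partial_\npt\kappa_{\gamma_\infty}\le0$, then show $\partial_\npt\kappa_{\gamma_\infty}\equiv0$ via the integral quantity $Z$ and the boundary cancellation \eqref{pompiere}. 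Only then does one get (Step 9) that $\gamma_\infty$ is a translating (in fact static after this normalization) piece of a Grim Reaper, ruled out because $Q_2^{\gamma_\infty}\equiv0$. Your proposal replaces this entire analysis with the remark that a Grim Reaper ``must cross the boundary line — already a geometric tension,'' which is not an argument; indeed, as Fig.~\ref{grim} in the paper shows, pieces of Grim Reapers perfectly compatible with the $\pi/3$-Neumann condition do exist and must be excluded by a different mechanism (the reflection quantity $Q_2$), not by tangency considerations. This Harnack-at-the-boundary step is the piece the introduction says was missing in \cite{MNT:04}, so omitting it is not a minor shortcut.
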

\begin{proof}
Let us assume by contradiction that 
$\gamma$ develops a type II singularity at $t=T$. 
We employ a rescaling procedure 
originally due to R. Hamilton (see \cite{Al:91}).
Let us choose as in  \cite[Section 7.1]{MNT:04}
a sequence $\{(x_n,t_n)\} \subset [0,1] \times 
[0,T)$ 
satisfying the following properties:
\begin{itemize}
\item[-] $t_n\in [0,T-1/n)$ and 
$t_n < t_{n+1}$ 
for any $n \in \mathbb N$;
\item[-] letting
$$
\paraII_n := |\kappa_{\gamma}(x_n,t_n)|, \qquad n \in \mathbb N,
$$
we have $0 < \paraII_n < \paraII_{n+1}$ and $\lim_{n \to +\infty}
\paraII_n = +\infty$;
\item[-] 
\begin{equation}\label{att}
\lim_{n \to +\infty}
\paraII_n \sqrt{T-1/n-t_n} = +\infty,
\end{equation}
and for any $n \in \mathbb N$ 
\begin{equation}\label{eqeq}
\paraII_n \sqrt{T-1/n-t_n} = 
\max_{t\in [0,T-1/n]}\left(
\| \kappa_{\gamma(t)}\|_{L^\infty([0,1])} ~
\sqrt{T-1/n-t}\right).
\end{equation}
\end{itemize}
Note that the maximum in \eqref{eqeq} is attained in $[0,T- 1/n)$
by \eqref{att}. Note also that
\begin{equation}\label{notethatthemax}
\lim_{n \to +\infty} -\paraII_n^2 t_n = -\infty, 
\qquad
\lim_{n \to +\infty} \paraII_n^2 (T-t_n) = +\infty.
\end{equation}
Let us define the 
parameter 
$\nuovoparametrotemporale$ as 
$$
t(\nuovoparametrotemporale) := t_n + \nuovoparametrotemporale/\paraII_n^2,
\qquad 
\nuovoparametrotemporale
\in [-\paraII_n^2 t_n, \paraII_n^2(T-t_n)),
$$
and
the  curves $\gamma_n$ as 
\[
\gamma_n(x,\nuovoparametrotemporale) := \paraII_n\Big( 
\gamma\left(x,t(\nuovoparametrotemporale)\right) - 
\gamma(x_n,t_n)\Big), \qquad x \in [0,1], \ 
\nuovoparametrotemporale
\in [-\paraII_n^2 t_n, \paraII_n^2(T-t_n))\,.
\]
We have
\begin{equation}\label{kappanorm}
\gamma_n(x_n,0) = (0,0), \qquad |\kappa_{\gamma_n}
(x_n,0)| = 1, \qquad n \in \mathbb N.
\end{equation}
{}From \eqref{eqeq} it follows as in \cite[Sec. 7]{MNT:04} that 
for every $\varepsilon,\omega>0$ there exists $\overline n\in
\mathbb N$ such that 
\begin{equation}\label{vedi}
\| \kappa_{\gamma_n(\nuovoparametrotemporale)}
\|_{L^\infty([0,1])}\le 1
+\varepsilon, \qquad 
n\ge\overline n, \ \ 
\nuovoparametrotemporale\in 
[ -\paraII_n^2 t_n,\omega].
\end{equation}
We now divide the proof of the theorem into nine steps.

\medskip

\noindent
{\it Step 1}. We have 
\begin{equation}
\label{eq:lengthinf}
\lim_{n \to +\infty}L(\gamma_n(\nuovoparametrotemporale))=+\infty,
\qquad \nuovoparametrotemporale \in \R.
\end{equation}
Indeed, this is obvious if $T$ is not the extinction time,
since in that case $\inf_{t \in [0,T)}L(\gamma(t))>0$.
If $T$ is the extinction time, namely  $0=\lim_{t\to T^-}
L(\gamma(t)) = \lim_{t \to T^-} \vert \regione(\gamma(t))\vert$, 
by the isoperimetric inequality and taking into account that 
$\gamma$ satisfies  
\eqref{ipo:datum}, it follows that 
there exists 
an absolute constant $c>0$ 
such that $L(\gamma(t)) \geq c 
\sqrt{\vert \regione(\gamma(t))\vert}$ for all $t\in [0,T)$.
Hence, to prove \eqref{eq:lengthinf}   
it is enough
to show that 
\begin{equation}\label{volinf}
\lim_{n \to +\infty}
\vert \regione(\gamma_n(\nuovoparametrotemporale))\vert = +\infty, 
\qquad 
\nuovoparametrotemporale\in \R.
\end{equation}
Recalling \eqref{eqvol}, we have 
\[
\vert \regione(\gamma_n(\nuovoparametrotemporale))\vert = \paraII_n^2 \vert 
\regione(\gamma(t(\nuovoparametrotemporale)))\vert 
= \frac{4}{3}\pi \paraII_n^2 \left(T-t(\nuovoparametrotemporale)\right),
\qquad 
\nuovoparametrotemporale \in [-\paraII_n^2 t_n, \paraII_n^2 (T-t_n)).
\]
In particular $\vert \regione(\gamma_n(0))\vert 
= \frac{4}{3}\pi\paraII_n^2 (T-t_n)$, hence 
$\lim_{n \to +\infty}
\vert \regione(\gamma_n(0))\vert = +\infty$ by \eqref{notethatthemax}.
Then step 1 follows,
since 
$
\vert 
\regione(\gamma_n(\nuovoparametrotemporale))\vert = 
\vert 
\regione(\gamma_n(0))\vert  
- \frac{4}{3}\pi \nuovoparametrotemporale$ for any 
$\nuovoparametrotemporale \in 
[-\paraII_n^2 t_n, \paraII_n^2 (T-t_n))$.
\medskip

Before passing to the next step we need some preparation.
Given $\nuovoparametrotemporale \in [-\paraII_n^2 t_n, \paraII_n^2 (T-t_n))$, 
we now reparametrize the curves $\gamma_n(\nuovoparametrotemporale)$ 
by arclength 
and, performing a suitable translation 
in the parameter space, we obtain curves 
$$
\gammaresctipoII_n(\nuovoparametrotemporale): 
 [a_n(\nuovoparametrotemporale),b_n(\nuovoparametrotemporale)] 
\to \R^2,
$$
with 
$a_n(\nuovoparametrotemporale)\le 0\le b_n(\nuovoparametrotemporale)$, 
and $b_n(\nuovoparametrotemporale)- a_n(\nuovoparametrotemporale) = 
L(\gamma_n(\nuovoparametrotemporale))$. 

Thanks to \eqref{eq:lengthinf}, we have 
\begin{equation}\label{astutoc}
\lim_{n \to +\infty}
\Big(
b_n(\nuovoparametrotemporale)-a_n(\nuovoparametrotemporale)
\Big)
= +\infty, \qquad 
\nuovoparametrotemporale\in \R.
\end{equation}
Without loss of generality we  assume 
\begin{equation}\label{cincin}
\gammaresctipoII_n(0,0)=\gamma_n(x_n,0)=(0,0).
\end{equation}
We can also assume that there exists a subsequence $\{n_j\}$ such that 
\begin{equation}\label{fixarc}
\lim_{j \to +\infty}
a_{n_j}(0) =: a_\infty \in [-\infty, 0],
\qquad
\lim_{j \to +\infty} b_{n_j}(0) =: b_\infty 
\in [0, +\infty].
\end{equation}
Note that by \eqref{astutoc} we have that if $a_\infty 
\in (-\infty,0]$ (resp. $b_\infty \in [0,+\infty)$) then 
$b_\infty = +\infty$ (resp. $a_\infty = - \infty$).

We now choose the starting point of the reparametrization 
(still keeping the notation $\widehat \gamma_n$)
as follows.
If $b_\infty = +\infty$
we set 
$a_{n_j}(\nuovoparametrotemporale) := a_{n_j}(0)$ for any 
$\nuovoparametrotemporale \in \R$; 
if $b_\infty  \in [0,+\infty)$ we set 
$b_{n_j}(\nuovoparametrotemporale) := b_{n_j}(0)$ for any 
$\nuovoparametrotemporale \in \R$.
Hence in both cases
\begin{equation}\label{fixarcnew}
\lim_{j \to +\infty}
a_{n_j}(\nuovoparametrotemporale) =: a_\infty,
\qquad
\lim_{j \to +\infty} b_{n_j}(\nuovoparametrotemporale) =: b_\infty, 
\qquad \nuovoparametrotemporale \in \R.
\end{equation}
 If $a_\infty \in (-\infty,0]$ (resp. $b_\infty \in [0,+\infty)$) we set
$I_\infty :=  [a_\infty, +\infty)$ (resp.
$I_\infty :=  (-\infty, b_\infty]$);
if $\vert a_\infty\vert = b_\infty = +\infty$ we set $I_\infty := \R$. 
Observe that $0 \in I_\infty$.

\medskip

Exploiting also \eqref{cincin}, 
the proof of the next step 
is the same as  in \cite[Prop. 7.1]{MNT:04}, using also 
\eqref{astutoc}, \eqref{vedi} and \eqref{kappanorm}.

\noindent
{\it Step 2}. 
The sequence 
$\{\gammaresctipoII_{n_j}\}$  admits a subsequence
$\{\gammaresctipoII_{n_{j^{}_h}}\}$ 
converging in $\C^2_{\rm loc}(I_\infty\times \R;\R^2)$ to an 
embedded 
curvature evolution $\gamma_\infty \in \C^\infty(I_\infty \times \R; \R^2)$
with 
\begin{eqnarray}
L(\gamma_\infty(\nuovoparametrotemporale)) &=& +\infty,
\qquad \nuovoparametrotemporale \in \R,
\nonumber
\\
\gamma_\infty(0,0) &=& (0,0),
\nonumber 
\\
\label{siconciau}
\Vert \kappa_{\gamma_\infty}\Vert_{L^\infty(I_\infty \times \R)} &=&  1
= \vert \kappa_{\gamma_\infty}(0,0)\vert.
\end{eqnarray}
Moreover
\begin{itemize}
\item[-] 
if $I_\infty = [a_\infty,+\infty)$ 
then
${\gamma_\infty}_s(a_\infty,\nuovoparametrotemporale) = (1/2,\sqrt 3/2)$
for all $\nuovoparametrotemporale \in \R$, and
$$
{\gamma_\infty}_2(s,\nuovoparametrotemporale)
\ge {\gamma_\infty}_2(a_\infty,\nuovoparametrotemporale),
\qquad s \in I_\infty, ~
\nuovoparametrotemporale\in\R;
$$
\item[-]
if
$I_\infty = (-\infty,b_\infty]$
then
${\gamma_\infty}_s(b_\infty,\nuovoparametrotemporale) = (1/2,-\sqrt 3/2)$ 
for all $\nuovoparametrotemporale\in\R$, and
$$
{\gamma_\infty}_2(s,\nuovoparametrotemporale)\ge 
{\gamma_\infty}_2(b_\infty,\nuovoparametrotemporale), 
\qquad s \in I_\infty, ~
\nuovoparametrotemporale\in\R.
$$
\end{itemize}

\medskip

Note that the $\C^2_{{\rm loc}}(I_\infty\times\R; \R^2)$-convergence 
can be improved to $\C^\infty_{{\rm loc}}(I_\infty\times \R; \R^2)$
\cite{GH:86}, since the curves $\widehat \gamma_n$ evolve by curvature
and have a uniform $L^\infty$-bound on their curvature.

\medskip

\noindent
{\it Step 3}. 
For all $\nuovoparametrotemporale \in \R$ we have 
$\kappa_{\gamma_\infty}(s,\nuovoparametrotemporale)\neq 0$ for all
$s \in I_\infty$.

We follow \cite[Th. 7.7]{Al:91}.
Write
for simplicity 
$$
J_h(\nuovoparametrotemporale) := [
a_{n_{j^{}_h}}(\nuovoparametrotemporale), 
b_{n_{j^{}_h}}(\nuovoparametrotemporale) 
], \qquad 
\widehat \kappa_h(s,\nuovoparametrotemporale) = \kappa_{
\widehat 
\gamma_{n_{j^{}_h}}}
(s, \nuovoparametrotemporale), \qquad
z_h(\nuovoparametrotemporale):= 
\left\{s \in J_h(\nuovoparametrotemporale):\,\widehat \kappa_h
(s,\nuovoparametrotemporale
)=0\right\}.
$$
For all $M>0$, 
recalling \eqref{eq:absvalkappa}, we have 
\begin{equation}\label{stanc}
\begin{aligned}
-2\int_{-M}^M 
\sum_{
s \in z_h(\nuovoparametrotemporale)
}
|\partial_s \widehat \kappa_h |
\,d\nuovoparametrotemporale
&=
\int_{-M}^M \frac{d}{d \nuovoparametrotemporale}
\int_{
J_h(\nuovoparametrotemporale)
}
|\widehat\kappa
_h|\,ds\,d\nuovoparametrotemporale 
\\
&=
\int_{
J_h(M)
}|\widehat \kappa_h(s,M)|\,ds - \int_{
J_h(-M)
}|\widehat \kappa_h(s,-M)|\,ds.
\end{aligned}
\end{equation}
Using the invariance of  $\int_{I(t)} 
|\kappa_\gamma(\cdot,t)|~ds$ 
under rescalings and writing
$$
\gamma_h := \gamma_{n_{j^{}_h}}, \qquad 
t_h := t_{n_{j^{}_h}}, \qquad \paraII_h := \paraII_{n_{j^{}_h}}, 
$$
{}from \eqref{stanc} we then obtain
\begin{equation}\label{otti}
\begin{aligned}
& -2
\int_{-M}^M \sum_{s \in z_h(\nuovoparametrotemporale
)}|\partial_s \widehat
\kappa_h |\,d\nuovoparametrotemporale
\\
= &
\int_{
I\big(
t_h 
+
\frac{M}{\paraII_h^2}
\big)}
|\kappa_{\gamma_h}
(s, t_h 
+
M/\paraII_h^2
)
|\,ds - \int_{I\big(t_h 
-\frac{M}{\paraII_h^2}\big)}
|
\kappa_{\gamma_h}
(s, t_h 
-
M/\paraII_h^2
)
|
\,ds. 
\end{aligned}
\end{equation}
In view of 
Proposition \ref{prop:tripodi} the function $t\to \int_{I(t)}
|\kappa_{\gamma(t)}|\,ds$ is
nonincreasing, hence it admits
a  finite limit as $t \to T^-$. In particular, 
$$
\lim_{h \to +\infty}
\int_{
I\left(
t_h 
+\frac{M}{\paraII_h^2}\right)}
|\kappa_{\gamma_h}(s, t_h 
+
M/\paraII_h^2
)
|\,ds =
\lim_{h \to +\infty}
 \int_{I\left(t_h 
-
\frac{M}{\paraII_h^2}\right)}
|
\kappa_{\gamma_h}
(s, t_h 
-
M/\paraII_h^2
)
|
\,ds. 
$$
It then follows from \eqref{otti} that 
\begin{equation}\label{bisco}
\lim_{h \to +\infty}
\int_{-M}^M \sum_{s \in z_h(\nuovoparametrotemporale
)}|\partial_s \widehat
\kappa_h |\,d\nuovoparametrotemporale
=0.
\end{equation}
{}From \eqref{bisco} and Fatou's Lemma we deduce that 
\begin{equation}\label{dil}
0 = \liminf_{h\to+\infty}\sum_{s \in z_h
(\nuovoparametrotemporale)}|\partial_s \widehat\kappa_h 
(s,\nuovoparametrotemporale)
|
\qquad 
{\rm for ~ a.e.~} 
\nuovoparametrotemporale
\in [-M,M].
\end{equation}
Since \eqref{dil} holds for any $M>0$, 
and all quantities involved are continuous
with respect to $\nuovoparametrotemporale$, we obtain 
\begin{equation}\label{bianca}
0 = \liminf_{h\to+\infty}\sum_{s \in z_h
(\nuovoparametrotemporale)}|\partial_s \widehat\kappa_h
(s,\nuovoparametrotemporale)
|,
\qquad \nuovoparametrotemporale \in \R.
\end{equation}
On the other hand, the $\C^2_{{\rm loc}}(I_\infty\times \R; \R^2)$-convergence of 
$\widehat\gamma_h$ to $\gamma_\infty$ 
given in step 2 implies that 
\begin{equation}\label{latte}
\liminf_{h\to+\infty}\sum_{s \in z_h
(\nuovoparametrotemporale)}|\partial_s \widehat\kappa_h
(s,\nuovoparametrotemporale)
|
 \ge 
\sum_{s \in I_\infty:\,\kappa_{\gamma_\infty}
(s,\nuovoparametrotemporale)=0}|\partial_s \kappa_{\gamma_\infty}
(s,\nuovoparametrotemporale)
|, 
\qquad \nuovoparametrotemporale \in \R.
\end{equation}
Since the right hand side of \eqref{latte} is nonnegative,
from \eqref{bianca} we deduce 
$$
0=\sum_{s \in I_\infty:\,
\kappa_{\gamma_\infty}(s,\nuovoparametrotemporale)
=0}|\partial_s \kappa_{\gamma_\infty}
(s,\nuovoparametrotemporale)
|, 
\qquad \nuovoparametrotemporale \in \R.
$$
It follows that for any $\nuovoparametrotemporale \in \R$ we have
$$
\Big\{
s \in I_\infty : 
\kappa_{\gamma_\infty}(s,\nuovoparametrotemporale)
=0, ~
\partial_s \kappa_{\gamma_\infty}(s,\nuovoparametrotemporale)
 \neq 0\Big\} = \emptyset.
$$
On the other hand, $\gamma_\infty$ evolves by curvature (see 
step 2), and therefore,
from the results of \cite{An:91}, if there exists 
$(s,\nuovoparametrotemporale) \in I_\infty \times\R$ such that  
$\kappa_{\gamma_\infty}(s,\nuovoparametrotemporale)
=0$ and 
$\partial_s \kappa_{\gamma_\infty}(s,\nuovoparametrotemporale)
 =0$,
then 
$\gamma_\infty(\cdot,\nuovoparametrotemporale)$ 
is linear, hence
$\gamma_\infty(\cdot,\cdot)$ 
is linear. 
Since this 
is in contradiction with 
\eqref{siconciau}, the proof of step 3 is concluded.

\medskip

{\it Step 4}. $I_\infty \neq \R$. 

Indeed, assume by contradiction that $I_\infty = \R$.
{}From step 3, reasoning as in \cite[pp. 512-513]{Al:91}
it follows that $\gamma_\infty$ 
is the so-called {\it grim reaper}. 
For the grim reaper
the function $Q_1^{\gamma_\infty} : \R \to (0,+\infty)$ defined on 
the right hand side of 
\eqref{defQ1} (with $[0,1]$  replaced by $I_\infty$) 
is identically zero.
On the other hand, from 
\eqref{eq:invresc} and arguing as in 
step 4 of the proof of Theorem \ref{th:singuno}  
we have that 
 $g_{\widehat \gamma_h} : [- \paraII_h^2 t_h, \paraII_h^2 (T- t_h)) \to (0,+\infty)$ 
is bounded from below by a  positive
constant uniformly with respect to $h \in \mathbb N$. 
Recall now that the sequence
$\{\widehat\gamma_h\}$ converges in $\C^2_{{\rm loc}}(I_\infty \times \R;\R^2)$
to $\gamma_\infty$
and that we have
(similarly to the inequality in \eqref{quant})
\begin{equation}\label{squic}
Q_1^{\gamma_\infty}(\nuovoparametrotemporale) \geq 
\limsup_{h \to +\infty} 
Q_1^{\widehat \gamma_h}(\nuovoparametrotemporale)\geq 
\limsup_{h \to +\infty} 
g_{\widehat \gamma_h}(\nuovoparametrotemporale).
\qquad \nuovoparametrotemporale\in\R.
\end{equation}
Then \eqref{squic} is 
in contradiction with  $Q_1^{\gamma_\infty}\equiv 0$, and the proof
of step 4 is concluded.

\medskip

Thanks to step 3 we can consider only two cases: either
$\kappa_{\gamma_\infty}(s,\nuovoparametrotemporale)<0$
for any $(s, \nuovoparametrotemporale) \in I_\infty \times \R$, 
or 
$\kappa_{\gamma_\infty}(s,\nuovoparametrotemporale)>0$
for any $(s, \nuovoparametrotemporale) \in I_\infty \times \R$.
Let us first assume 
\begin{equation}\label{durg}
\kappa_{\gamma_\infty}(s,\nuovoparametrotemporale)<0, \qquad
(s, \nuovoparametrotemporale) \in I_\infty \times \R.
\end{equation}
Recalling our conventions
(see Remark \ref{rem:conve}), inequality \eqref{durg} implies that 
$\gamma_\infty(\cdot,\nuovoparametrotemporale)$
is a convex curve. 

\medskip

{}From step 4 we have that either $a_\infty$ is finite or $b_\infty$
is finite. We assume that $a_\infty \in (-\infty,0]$, the case
$b_\infty \in [0,+\infty)$ being analogous. Therefore we have
$$
I_\infty= [a_\infty,+\infty).
$$
Observe that from \eqref{fixarcnew} we have 
\begin{equation}\label{dodiciottoa}
{\gamma_\infty}_2(a_\infty,
\nuovoparametrotemporale) = {\gamma_\infty}_2(a_\infty,0), 
\qquad \nuovoparametrotemporale \in \R.
\end{equation}
Recall also (see step 2) that 
\begin{equation}\label{dodiciottob}
\partial_s \gamma_\infty(a_\infty,
\nuovoparametrotemporale) = \left(\frac{1}{2}, \frac{\sqrt{3}}{2}\right),
\qquad \nuovoparametrotemporale \in \R.
\end{equation}
\medskip

{\it Step 5}. 
We have 
\begin{equation}\label{bonfi}
\int_{I_\infty} 
\kappa_{\gamma_\infty}(s,\nuovoparametrotemporale) ~ds \in [-\pi/3, 0),
\qquad \nuovoparametrotemporale \in \R.
\end{equation}
Indeed, if by contradiction 
there exists $\nuovoparametrotemporale \in \R$ such that 
the left hand side of \eqref{bonfi} 
is less than $-\pi/3$, then 
thanks to \eqref{durg} and the Neumann boundary condition \eqref{dodiciottob}, 
the curve $\gamma_\infty(\cdot,\nuovoparametrotemporale)$ has 
another intersection
 (different from $\gamma_\infty(a_\infty,0)$)
 with 
the horizontal axis $\ell$ passing from $\gamma_\infty(a_\infty,0)$. 
This
 implies $Q_2^{\gamma_\infty} \equiv 0$, 
where $Q_2^{\gamma_\infty}$ is defined as in \eqref{defQ1} (with $[0,1]$
replaced by $I_\infty$, and $\gamma_\infty^{{\rm sp}}$ 
is now the specular of $\gamma_\infty$ with respect to $\ell$). This 
leads to
a contradiction, as in step 4. 

\smallskip

In particular, the convex curve 
$\gamma_\infty(\cdot,\nuovoparametrotemporale)$ can be written as the 
graph of a 
strictly concave smooth function $y=
y(x,\nuovoparametrotemporale)$, where $(x,\nuovoparametrotemporale)\in 
[{\gamma_\infty}_1(a_\infty,\nuovoparametrotemporale),+\infty)\times\R$.

Let $\theta(x,\nuovoparametrotemporale):=\tan^{-1}(y_x(x,\nuovoparametrotemporale))\in (0,\pi/3]$ 
be the angle that the tangent vector to 
$\gamma_\infty(\cdot,\nuovoparametrotemporale)$ makes with 
the first 
coordinate axis. 

\medskip

{\it Step 6}. We have 
\begin{equation}\label{mare}
\partial_{\nuovoparametrotemporale}\kappa_{\gamma_\infty}(s,
\nuovoparametrotemporale)
\leq 0, \qquad 
(s,\nuovoparametrotemporale) \in I_\infty \times\R.
\end{equation}
Write for simplicity
\begin{equation}\label{lasido}
\kappa_{\gamma_\infty} = \kappa.
\end{equation}
Recalling that 
$\gamma_\infty$ evolves by curvature,
the evolution of $\kappa$ in the $(\theta,\nuovoparametrotemporale)$-coordinates 
reads as follows (see \cite{GH:86}):
\begin{equation}\label{eqkk}
\partial_{\nuovoparametrotemporale} \kappa
= \kappa^2 \kappa_{\theta\theta} 
+ \kappa^3.
\end{equation}
Let $\nuovoparametrotemporale_1\in\R$ and define 
$h:=\kappa
+2(\nuovoparametrotemporale-\nuovoparametrotemporale_1)
\partial_\nuovoparametrotemporale
\kappa
$. 
We have $h(\theta,\nuovoparametrotemporale_1)< 0$ for any $\theta \in 
(0,\pi/3]$, and
\begin{equation}\label{rompo}
h_{\nuovoparametrotemporale}  
= \kappa^2 h_{\theta\theta} + 
\left( \kappa^2 + 
\frac{
2
\partial_{\nuovoparametrotemporale}\kappa
}{\kappa}\right) h.
\end{equation}
Moreover, from $\partial_s = \kappa \partial_\theta$
and  \eqref{eqkappasdiffdue} we have that $h$ satisfies the 
 boundary condition
\begin{equation}\label{soffiare}
h_\theta
\left(\frac{\pi}{3},\nuovoparametrotemporale\right) 
= \frac{1}{\sqrt 3} ~h
\left(\frac{\pi}{3},\nuovoparametrotemporale\right), \qquad 
\nuovoparametrotemporale \in \R.
\end{equation}
We now observe that the 
remaining Dirichlet boundary condition for $h$ reads as
\begin{equation}\label{nonimme}
h(0,\nuovoparametrotemporale) =0, \qquad \nuovoparametrotemporale\in\R.
\end{equation}
Indeed, from \eqref{durg} and 
\eqref{bonfi} and the Lipschitz continuity of
$\kappa$ in $s$, which is uniform 
with respect to $\nuovoparametrotemporale$ (this follows from \eqref{siconciau} and
the interior regularity estimates in \cite{EcHu:91}),
we have 
\begin{equation}\label{ciaba}
\lim_{\theta \to 0^+} 
\kappa(\theta, \npt) = 0, \qquad \npt \in \R.
\end{equation} 
Using again \cite{EcHu:91} we deduce 
\begin{equation}\label{tte}
\lim_{\theta \to 0^+} 
\kappa_\theta(\theta, \npt) = 
\lim_{\theta \to 0^+} 
\kappa_{\theta\theta}(\theta, \npt) = 0, \qquad \npt \in \R.
\end{equation}
 Then 
\eqref{nonimme} follows from \eqref{ciaba} and \eqref{tte}. 
 
\smallskip

By \eqref{rompo}, \eqref{soffiare}
, \eqref{nonimme} and the maximum principle 
it then follows $h(\theta,\nuovoparametrotemporale)\leq 0$ 
for all $\theta \in (0,\pi/3]$ 
and $\nuovoparametrotemporale\ge\nuovoparametrotemporale_1$, hence
\[
\partial_{\nuovoparametrotemporale}
\kappa
 \leq - \frac{\kappa}{2(\nuovoparametrotemporale-\nuovoparametrotemporale_1)},
\qquad 
\nuovoparametrotemporale >
\nuovoparametrotemporale_1,
\]
which implies \eqref{mare}, by letting $\nuovoparametrotemporale_1\to -\infty$.

\medskip

{\it Step 7}. We have 
\begin{equation}\label{mareotto}
\partial_{\nuovoparametrotemporale}\kappa_{\gamma_\infty}
(s,\nuovoparametrotemporale)
= 0, \qquad 
(s,\nuovoparametrotemporale) \in I_\infty \times\R.
\end{equation}

Let us adopt the notation in \eqref{lasido}, 
and define
$Z(\nuovoparametrotemporale):=
\int_{0}^{\pi/3}\partial_{\nuovoparametrotemporale} 
(\log(-\kappa))\,d\theta$. 
Notice that $Z\ge 0$ since 
$\partial_{\nuovoparametrotemporale}\kappa \le 0$ by step 6 
and $\kappa <0$ by \eqref{durg}. Step 7 will be proved
if we show that 
\begin{equation}\label{Zzero}
Z \equiv 0.
\end{equation}
Following \cite[Section 8]{Al:91} we compute
\begin{equation}\label{chie}
\kappa_{\nuovoparametrotemporale\nuovoparametrotemporale}
= 
(\kappa^2 \kappa_{\theta\theta}+
\kappa^3)_{\nuovoparametrotemporale}
= 
\kappa^2(\kappa_{\theta\theta 
\nuovoparametrotemporale} + \kappa_t)+2\frac{(\kappa_\nuovoparametrotemporale
)^2}{\kappa}.
\end{equation}
Using \eqref{chie} and integrating by parts we get
\begin{equation}\label{devopassar}
\begin{aligned}
 Z'(\nuovoparametrotemporale) =&
\int_{0}^{\pi/3}\partial_{\nuovoparametrotemporale} 
\left(
\frac{\kappa_\nuovoparametrotemporale}{\kappa}\right)\,d\theta\,
\\
=& 
\int_{0}^{\pi/3}
\frac{\kappa_{\nuovoparametrotemporale\nuovoparametrotemporale}}{\kappa}
\,d\theta - 
\int_0^{\pi/3} \frac{\kappa_\nuovoparametrotemporale (\kappa^2 \kappa_{\theta
\theta} + \kappa^3)}{\kappa^2} d \theta
\\
=&
\int_{0}^{\pi/3}
\kappa
(\kappa_{\theta \theta \nuovoparametrotemporale}
+ \kappa_{\nuovoparametrotemporale}) + 2\frac{(\kappa_\nuovoparametrotemporale)^2}{\kappa^2}
\,d\theta - 
\int_0^{\pi/3} \frac{\kappa_\nuovoparametrotemporale (\kappa^2 \kappa_{\theta
\theta} + \kappa^3)}{\kappa^2} d \theta
\\ 
=& 
\int_{0}^{\pi/3}
\kappa
\kappa_{\theta \theta \nuovoparametrotemporale}
- 
\kappa_\nuovoparametrotemporale 
\kappa_{\theta
\theta} 
+ 2\frac{(\kappa_\nuovoparametrotemporale)^2}{\kappa^2}  
d \theta
\\
=&
\kappa(\pi/3,\nuovoparametrotemporale)
 \kappa_{\theta \nuovoparametrotemporale}
(\pi/3,\nuovoparametrotemporale)
- 
\kappa_\theta(\pi/3,\nuovoparametrotemporale)
 \kappa_{\nuovoparametrotemporale}(\pi/3,\nuovoparametrotemporale)
+ 
 2 \int_0^{\pi/3}\frac{(\kappa_\nuovoparametrotemporale)^2}{\kappa^2}  
d \theta.
\end{aligned}
\end{equation}
We now observe that from $\kappa_s = \kappa \kappa_\theta$
and from \eqref{behh} we have
$$
\kappa_\theta(\pi/3, \nuovoparametrotemporale) 
= \frac{\kappa(\pi/3, \nuovoparametrotemporale)}{\sqrt{3}},
 \qquad \nuovoparametrotemporale 
\in \R.
$$
Differentiating this 
relation 
with respect to $\nuovoparametrotemporale$ we obtain
\begin{equation}\label{pompiere}
\kappa(\pi/3,\nuovoparametrotemporale)
 \kappa_{\theta \nuovoparametrotemporale}
(\pi/3,\nuovoparametrotemporale)=
\kappa_\theta(\pi/3,\nuovoparametrotemporale)
 \kappa_{\nuovoparametrotemporale}(\pi/3,\nuovoparametrotemporale),
 \qquad \nuovoparametrotemporale 
\in \R.
\end{equation}
{}From \eqref{devopassar}, \eqref{pompiere} and 
the Schwarz's inequality we deduce
$$
Z'(\nuovoparametrotemporale) = 
 2 \int_0^{\pi/3}\frac{(\kappa_\nuovoparametrotemporale)^2}{\kappa^2}  
d \theta
=  2\int_{0}^{\pi/3}\left(
\partial_{\nuovoparametrotemporale} (\log(-\kappa))\right)^2\,d\theta
\ge \frac{6 Z^2(\nuovoparametrotemporale)}{\pi}.
$$
Assume now that $Z(\nuovoparametrotemporale_1)>0$ for some $\nuovoparametrotemporale_1\in\R$. It follows that $Z(\nuovoparametrotemporale)\ge Z(\nuovoparametrotemporale_1)>0$ 
for all $\nuovoparametrotemporale\ge\nuovoparametrotemporale_1$, 
which implies 
\[
Z(\nuovoparametrotemporale_1) \le \frac{1}{\frac{1}{Z(\nuovoparametrotemporale_2)}+\frac 6 \pi (\nuovoparametrotemporale_2-\nuovoparametrotemporale_1)} \le \frac{\pi}{6(\nuovoparametrotemporale_2-\nuovoparametrotemporale_1)}
\]
for all $\nuovoparametrotemporale_2\ge \nuovoparametrotemporale_1$.
Letting $\nuovoparametrotemporale_2\to +\infty$ we get $Z(\nuovoparametrotemporale_1) \leq 0$,
a contradiction. Hence \eqref{Zzero} follows, and the proof of step 7
is concluded.

\medskip 

{\it Step 8}. Assume now that
\begin{equation}\label{durgpos}
\kappa_{\gamma_\infty}(s,\nuovoparametrotemporale)>0, \qquad
(s, \nuovoparametrotemporale) \in I_\infty \times \R.
\end{equation}

Reasoning as in step 5 we have 
\begin{equation}\label{bonfipos}
\int_{I_\infty} 
\kappa_{\gamma_\infty}(s,\nuovoparametrotemporale) ~ds \in (0, 2\pi/3],
\qquad \nuovoparametrotemporale \in \R.
\end{equation}
Note that in this case the image of $\gamma_\infty(\cdot,\npt)$
is not necessarily a graph, but still the function
$\theta$ is well-defined, thanks to \eqref{durgpos},
and takes values in 
$[\pi/3, \pi)$. 
 Reasoning
as in steps 6 and 7, using the boundary conditions \eqref{soffiare} and 
$$
h(0,\npt) = \pi, \qquad \npt \in \R,
$$
and the choice
$Z(\npt) := \int_{\pi/3}^\pi \partial_\npt
(\log \kappa)~d\theta$, we deduce that
\eqref{mareotto} is still valid.

\medskip

{\it Step 9}. $\gamma_\infty$ is one of the 
two specific pieces of the grim reaper depicted in Fig. \ref{grim}.

{}From step 7 and \eqref{eqkk} we have $\partial_{\theta\theta}\kappa_{\gamma_\infty} 
+ \kappa_{\gamma_\infty} =0$.
By direct integration 
and using \eqref{dodiciottob}, it follows 
that $\gamma_\infty$ is a one-parameter
family of pieces of grim reapers
(the parameter being for instance the horizontal velocity of 
translation), see Fig.  \ref{grim}. As in step 5,
we have $Q_2^{\gamma_\infty} \equiv 0$, which gives a contradiction. 
This shows that $\gamma$ cannot develop type II singularities, and
concludes the proof of the theorem.
\end{proof}

\begin{figure} 
\begin{center}
\includegraphics[height=1.4cm]{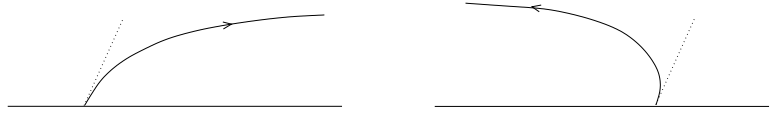}
\caption{{\small
Two pieces of the grim reaper, with the given $\pi/3$-Neumann boundary
condition.}}
\label{grim}
\end{center}
\end{figure}
%
\section{Examples}\label{secexa}
In the first example 
we show a graph-like initial datum 
$\inidat$  which develops a type II
singularity:
differently from Section \ref{secpostwo} (see \eqref{asspos}), in this case 
$\inidat_2$ changes sign. 

\subsection{Example 1}\label{secexa1}
For $x\in [0,1]$ let $\inidat(x):=(x,\overline f(x))$ 
where $\overline f$ is a smooth function the graph of which satisfies
the Neumann boundary conditions \eqref{neumangle} at $x=0$ and $x=1$, with 
the property that there exist $x_1, x_2 \in (0,1)$, 
$x_1<x_2$, such that $\overline f>0$ on $(0,x_1) \cup (x_2,1)$, 
and $\overline f<0$ on $(x_1,x_2)$ (see Fig. \ref{exa_typeII}). Set 
\begin{eqnarray*}
\int_0^{x_1}\overline f(x)\,dx =: \varepsilon >0, 
\qquad \int_{x_1}^{x_2} \overline f(x)\,dx =: -c <0.
\end{eqnarray*}
\begin{figure}
\begin{center}
\includegraphics[height=3.5cm]{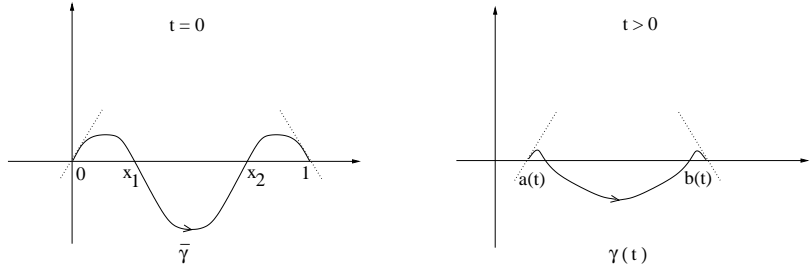}
\caption{{\small Example 1: the initial datum (left) 
and its evolution (right), which 
develops 
a type II singularity before the extinction.}}
\label{exa_typeII} 
\end{center}
\end{figure}
Then the image of $\gamma(t)$ can be written as the graph, over 
a smoothly variable interval $[a(t),b(t)]$, of a smooth function 
$f(\cdot,t):[a(t),b(t)]\to \R$, for $t\in [0,T)$, which solves
the problem
\begin{equation}\label{eqevolf}\left\{
\begin{array}{ll}
f_t &=\, \dfrac{f_{xx}}{1+(f_x)^2} \qquad \qquad {\rm in}~ (a(t),b(t)) \times (0,T),
\\
f(a(t),t)&=\,f(b(t),t)\,=\,0 
\quad \quad
t \in (0,T), 
\\
f_x(a(t),t) &= \,\sqrt 3 
\qquad \qquad \qquad \ \
t \in (0,T), 
\\
f_x(b(t),t) &=\, - \sqrt 3
\qquad \qquad \qquad 
t \in (0,T), 
\\
a(0) &= 0 
\\
b(0) &=\, 1
\\
f(\cdot,0) &= \,\overline f(\cdot) \qquad \qquad \qquad \  \ {\rm in}~ (0,1),
\end{array}\right.
\end{equation}
where,
for notational simplicity, we still
denote by $x$ the first variable in $\R^2$. 

By the maximum principle for $f_x$ (see 
\cite{SS:08}) the functions $f(\cdot,t)$ are Lipschitz 
continuous, with a Lipschitz constant
which is  uniform with respect to $t\in [0,T)$. 
By the smoothness of the flow, 
there exist $t_s \in (0,T]$ 
and two continuous functions $x_1, x_2 : [0,t_s) \to \R$,
with $a(t)<x_1(t)<x_2(t)<1$ for any $t \in [0,t_s)$, such that 
$x_i(0)=x_i$, $i=1,2$, $f(\cdot,t)>0$ on $(a(t),x_1(t))
\cup (x_2(t),1)$,
and $f(\cdot,t)<0$ on $(x_1(t),x_2(t))$.
Define, for any $t \in (0,t_s)$, the nonnegative functions
\[
V^+(t) := \int_{a(t)}^{x_1(t)} f(x,t)\,dx, 
\qquad V^-(t) := -\int_{x_1(t)}^{x_2(t)} f(x,t)\,dx. 
\]
By a direct computation, we get
\[
\frac{d}{dt} V^+(t) \le -\frac\pi 3,
\qquad \frac{d}{dt} V^-(t) \ge -\pi ,
\]
so that 
\begin{equation}\label{arte}
V^+(t)\le \varepsilon - \frac\pi 3 t,
\qquad V^-(t)\ge c-\pi t, \qquad t \in (0,t_s).
\end{equation}
Observe that if there exists $\overline t \in (0,t_s]$ such that 
$V^+>0$ in $[0,\overline t)$,  
$V^+(\overline t)=0$ (hence $a(\overline t) = x_1(\overline
t)$) and $V^->0$ in $[0,\overline t]$, then 
$\overline t$ is a singularity time due to the 
boundary conditions (and $\overline t$ is not the extinction time).
Hence, from \eqref{arte} it follows that 
if $\varepsilon$ is small enough, i.e.
$c-3\varepsilon >0$, a singularity occurs {\it before} the extinction 
of the evolution.
It follows that $t_s = T\le 3\varepsilon/\pi$.

Reasoning as in Theorem \ref{th:singuno}, 
we can exclude that $\gamma(t)$ develops 
type I singularities at $t = T$: indeed, developing
a type I 
singularity at $t=T$ would imply a nontrivial homotetic solution obtained
as a blow up,
which (thanks to the boundary conditions) is unique,
and 
would correspond to the extinction at $t=T$, 
which contradicts $\liminf_{t \to T^-} V^-(t)>0$. 
It  follows 
that $\gamma(t)$ develops a type II singularity at $t=T$. Arguing as in 
the proof of Theorem \ref{teosingII}, 
a suitable rescaled and translated version of $\gamma(t)$ 
converges either to a grim reaper 
or to a piece of the grim reaper  with a boundary point. 
In fact, we can rule out the first possibility, since the grim reaper 
cannot be written as the graph of a Lipschitz function.
We conclude that if $\epsilon < c/3$ 
a type II singularity
(the blow-up of which is as in Fig. \ref{grim})
 must occur before the extinction time.

\medskip
In the next example we show a singularity
due to collision of the boundary points, happening
before the extinction time.

\subsection{Example 2}\label{subexa2}
Let us consider an evolution similar to \eqref{eqevol},
where we 
substitute the boundary conditions on $\tangvers(0,t)$ and $\tangvers(1,t)$ with 
\begin{equation}\label{lotte}
\tangvers(0,t) = \left( -\frac 1 2, \frac{\sqrt 3}{2}\right)
\qquad 
\tangvers(1,t) = \left( -\frac 1 2, -\frac{\sqrt 3}{2}\right),
\end{equation}
so that the angle between 
$e_1$ and 
$\tangvers(t)$ equals 
$2\pi/3$ at 
$ \gamma(0,t) = (\gamma_1(0,t),0)$, and 
equals $ -2 
\pi/3$ at 
$\gamma(1,t) = (\gamma_1(1,t),0)$.

We still assume that $\inidat$ is smooth and embedded, 
with $\inidat_2> 0$ in $(0,1)$ as in Sections \ref{secpos},
\ref{secpostwo} (see Fig. \ref{centoventi}).
\begin{figure} 
\begin{center}
\includegraphics[height=2.0cm]{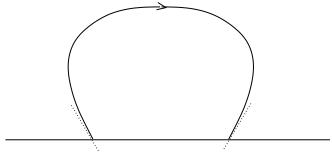}
\caption{{\small 
The initial datum in Example 2.
}}
\label{centoventi}
\end{center}
\end{figure}
At the singular time $t=T$ either \eqref{ksing} holds 
or the curvature stays bounded but there is a collision of the 
boundary points, i.e.
\begin{equation}\label{eqlength}
\liminf_{t\to T^-}|\gamma_1(1,t)-\gamma_1(0,t)| = 0.
\end{equation}
Notice that this is impossible for the solutions of \eqref{eqevol}, due to the boundary conditions.

Since Theorem \ref{th:singuno} applies also to this situation, 
we can exclude the formation of type I singularities before the extinction time.
Moreover, since $\inidat$ is embedded and $\inidat_2$ is positive in $(0,1)$, 
we can also exclude type II 
singularities, reasoning exactly as in Section \ref{secpostwo}.

Assume now that 
$T$ is the extinction time of the evolution, and that the evolution 
develops a  
type I singularity at $t=T$. By the analysis in 
Section \ref{secpos}, it follows 
that the evolution converges, after rescaling, to a homothetic solution.
However there are no such solutions compatible with the boundary conditions 
\eqref{lotte}, see \cite{CG:07}, \cite{H:07}. Hence
$T$ is not the extinction time of the evolution 
and \eqref{eqlength} necessarily holds. 
A collision of the boundary points occurs as $t \to T^-$,
while the curvature remains bounded. 


\end{document}